\title[Conjugacy class conditions in l.c.s.c groups]{Conjugacy class conditions in locally compact second countable groups}
\author{Phillip Wesolek} 
\address{Universit\'{e} catholique de Louvain,
Institut de Recherche en Math\'{e}matiques et Physique (IRMP),
Chemin du Cyclotron 2, box L7.01.02,
1348 Louvain-la-Neuve,
Belgique}
\email{phillip.wesolek@uclouvain.be}
\def\cprime{$'$} \def\cprime{$'$}
\providecommand{\bysame}{\leavevmode\hbox to3em{\hrulefill}\thinspace}
\providecommand{\MR}{\relax\ifhmode\unskip\space\fi MR }
\providecommand{\href}[2]{#2}
\newcommand{\N}{\mathbb{N}}
\newcommand{\ol}[1]{\overline{#1}}
\newcommand{\acts}{\curvearrowright}
\newcommand{\sleq}{\leqslant}
\newcommand{\sgeq}{\geqslant}
\newtheorem{thm}{Theorem} [section]
\newtheorem{lem}[thm]{Lemma}
\newtheorem{cor}[thm]{Corollary}
\newtheorem{fact}[thm]{Fact}
\theoremstyle{definition}
\newtheorem{rmk}[thm]{Remark}
\newtheorem*{claim}{Claim}
\newtheorem*{ack}{Acknowledgments}
\begin{document}

\begin{abstract}
Many non-locally compact second countable groups admit a comeagre conjugacy class. For example, this is the case for $S_{\infty}$, $Aut(\mathbb{Q},<)$, and, less trivially, $Aut(\mathcal{R})$ for $\mathcal{R}$ the random graph [Truss]. A. Kechris and C. Rosendal ask if a non-trivial locally compact second countable group can admit a comeagre conjugacy class. We answer the question in the negative via an analysis of locally compact second countable groups with topological conditions on a conjugacy class. 
\end{abstract}

\maketitle
\let\thefootnote\relax\footnote{\emph{2010 Mathematics Subject Classification.} Primary 22D05 03E15\\ \indent \emph{Key words and phrases.} Totally disconnected locally compact groups, profinite groups, comeagre conjugacy class.}

\section{Introduction}
Our goal is to answer a question of Kechris and Rosendal: Can a non-trivial locally compact second countable group admit a comeagre conjugacy class? By an unpublished argument due to K.H. Hofmann, such a group cannot be connected. This, of course, leaves the problematic totally disconnected locally compact case. Here the theory is not nearly as well developed. We therefore begin with an analysis of \emph{totally disconnected locally compact second countable} (t.d.l.c.s.c.) groups.
 
\begin{thm}\thlabel{non-meagre} 
Suppose $U$ is a second countable profinite group and $g\in U$. The following are equivalent:
\begin{enumerate}
\item $g^U:=\{ugu^{-1}\;|\;u\in U\}$ is non-meagre.
\item $g^U$ is open.
\item $\mu(g^U)>0$ where $\mu$ is the normalized Haar measure on $U$.
\item There is an integer $M>0$ such that $|C_{U/V}(gV)|\sleq M$ for every open normal subgroup $V\trianglelefteq U$.
\end{enumerate}
\end{thm}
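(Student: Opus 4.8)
The plan is to begin by recording two structural facts that drive everything. First, $g^U$ is always \emph{closed}: it is the image of the compact group $U$ under the continuous map $u \mapsto ugu^{-1}$, hence compact. Second, by second countability I fix a decreasing sequence $U = V_0 \geq V_1 \geq \cdots$ of open normal subgroups with $\bigcap_n V_n = \{1\}$ forming a neighbourhood basis at the identity. With these in hand the topological equivalence (1) $\Leftrightarrow$ (2) is quick: a nonempty open subset of the Baire space $U$ is non-meagre, giving (2) $\Rightarrow$ (1), while a closed set with empty interior is nowhere dense, hence meagre, so a non-meagre $g^U$ has nonempty interior. After conjugating an open $W_0 \subseteq g^U$ I may assume $g \in W \subseteq g^U$ with $W$ open; then for each $h = vgv^{-1} \in g^U$ the open set $vWv^{-1} \subseteq g^U$ is a neighbourhood of $h$, so $g^U$ is open, proving (1) $\Rightarrow$ (2).

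The heart of the argument is a measure computation at finite levels. For an open normal $V \trianglelefteq U$ the set $g^U V$ is the preimage in $U$ of the conjugacy class of $gV$ in the finite group $U/V$, so it is a union of $[U/V : C_{U/V}(gV)]$ cosets of $V$ and
\[
\mu(g^U V) = \frac{[U/V : C_{U/V}(gV)]}{|U/V|} = \frac{1}{|C_{U/V}(gV)|}.
\]
Since $g^U$ is closed, $g^U = \bigcap_n g^U V_n$, and since $g^U V \subseteq g^U W$ whenever $V \leq W$, the orders $|C_{U/V_n}(gV_n)|$ are non-decreasing in $n$. Continuity of $\mu$ from above gives
\[
\mu(g^U) = \lim_n \mu(g^U V_n) = \frac{1}{\sup_n |C_{U/V_n}(gV_n)|},
\]
with the convention $1/\infty = 0$. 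As the $V_n$ are cofinal among open normal subgroups and the orders are monotone, this supremum equals $\sup_V |C_{U/V}(gV)|$ over all open normal $V$; hence $\mu(g^U) > 0$ exactly when these orders are uniformly bounded, which is precisely (3) $\Leftrightarrow$ (4).

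It remains to connect the measure side back to openness. That (2) $\Rightarrow$ (3) is immediate, as a nonempty open subset of a compact group has positive Haar measure. For (3) $\Rightarrow$ (2), suppose $\mu(g^U) > 0$. Then the non-decreasing integer sequence $|C_{U/V_n}(gV_n)|$ is bounded, hence eventually constant, say for $n \geq N$, so $\mu(g^U V_n) = \mu(g^U)$ for all $n \geq N$. For $m \geq n \geq N$ the set $g^U V_n \setminus g^U V_m$ is a union of cosets of $V_m$ of measure zero, hence empty, giving $g^U V_n = g^U V_m$. Therefore $g^U = \bigcap_m g^U V_m = g^U V_N$ is open, closing the cycle $(1) \Rightarrow (2) \Rightarrow (3) \Rightarrow (2)$ together with $(3) \Leftrightarrow (4)$.

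I expect the genuine obstacle to be the implication (3) $\Rightarrow$ (2): positive measure by itself does not force a closed set to have interior (fat Cantor sets being the cautionary example), so the argument must exploit that each $g^U V_n$ is $V_m$-saturated, which is exactly what converts a measure-zero difference of open sets into the \emph{empty} set. The two load-bearing inputs making this possible are the finite-level identity $\mu(g^U V) = 1/|C_{U/V}(gV)|$ and the approximation $g^U = \bigcap_n g^U V_n$; once these are in place the remaining steps are elementary Baire-category and Haar-measure facts.
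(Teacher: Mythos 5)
Your proof is correct and follows essentially the same route as the paper: both rest on the closedness of $g^U$, the finite-level identity $\mu(g^U V)=1/|C_{U/V}(gV)|$, the approximation $g^U=\bigcap_n g^U V_n$ with continuity from above, and the observation that a null difference which is a union of cosets (equivalently, a null open set) must be empty. The only divergence is organizational --- you prove $(1)\Leftrightarrow(2)$ and $(3)\Leftrightarrow(4)$ directly and package the measure computation as $\mu(g^U)=1/\sup_V|C_{U/V}(gV)|$, whereas the paper runs the cycle $(1)\Rightarrow(2)\Rightarrow(3)\Rightarrow(4)\Rightarrow(1)$ --- but the mathematical content is identical.
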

\noindent Recall $C_{U/V}(gV)$ denotes the centralizer of $gV$ in $U/V$. We extend \thref{non-meagre} to the non-compact case.

\begin{cor}Suppose $G$ is a t.d.l.c.s.c. group and $g\in G$ is such that $cl(\langle g\rangle)$ is compact. The following are equivalent:
\begin{enumerate}
\item $g^G$ is open.
\item $g^G$ is non-meagre.
\item $g^G$ is non-null.
\end{enumerate}
\end{cor}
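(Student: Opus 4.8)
The plan is to reduce everything to the profinite case handled by \thref{non-meagre}. First I would use the hypothesis that $K := cl(\langle g\rangle)$ is compact to produce a compact open subgroup $U \sleq G$ with $g\in U$: starting from any compact open subgroup $V$ (which exists by van Dantzig's theorem), continuity of conjugation together with compactness of $K$ yields an identity neighbourhood $O$ with $KOK^{-1}\subseteq V$, so that $U_0 := \bigcap_{k\in K} kVk^{-1}$ is a $K$-invariant compact open subgroup, and then $U := KU_0$ is a compact open subgroup containing $K$, hence containing $g$. Being compact, totally disconnected, and second countable, $U$ is a second countable profinite group, so \thref{non-meagre} applies to the pair $(U,g)$. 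Writing $G=\bigsqcup_i x_iU$ as a union of left cosets, countable since $U$ is open in the second countable group $G$, and using $g^U\subseteq U$, I record the decomposition
\[
g^G = \bigcup_i x_i\, g^U\, x_i^{-1},
\]
a countable union of conjugate (hence homeomorphic) copies of $g^U$, each contained in the compact open subgroup $x_iUx_i^{-1}$.

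The implications $(1)\Rightarrow(2)$ and $(1)\Rightarrow(3)$ are immediate, since a nonempty open subset of the Baire space $G$ is non-meagre and a nonempty open set has positive Haar measure. For $(2)\Rightarrow(1)$, if $g^G$ is non-meagre then, as a countable union fails to be meagre only when some term does, at least one $x_ig^Ux_i^{-1}$ is non-meagre in $G$; since conjugation by $x_i$ is a self-homeomorphism of $G$, the set $g^U$ is non-meagre in $G$, and as $g^U\subseteq U$ with $U$ open, non-meagreness in $G$ and in $U$ coincide. Thus $g^U$ is non-meagre in $U$, so $g^U$ is open in $U$ by \thref{non-meagre}, hence open in $G$; then every $x_ig^Ux_i^{-1}$ is open and so is $g^G$. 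For $(3)\Rightarrow(1)$ I would argue identically with Haar measure in place of Baire category: countable additivity produces some $x_ig^Ux_i^{-1}$ of positive measure, conjugation preserves the measure class as it scales a left Haar measure $\lambda$ by a positive modulus, and $\lambda|_U$ is proportional to the normalized Haar measure $\mu$ on the compact group $U$, whence $\mu(g^U)>0$; \thref{non-meagre} again forces $g^U$ open, and $g^G$ follows.

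The one genuine obstacle is that $g^G\cap U$ need not equal $g^U$, since elements of $U$ that are conjugate in $G$ may fail to be conjugate in $U$, so I cannot simply intersect $g^G$ with $U$ and invoke the profinite theorem directly. The decomposition $g^G=\bigcup_i x_ig^Ux_i^{-1}$ circumvents this by exhibiting $g^G$ as a controlled countable union in which topological largeness (non-meagreness or positive measure) is inherited by a single conjugate of $g^U$ and then transported back to $g^U$ itself via the conjugation homeomorphism. The compactness of $cl(\langle g\rangle)$ enters precisely in securing the ambient compact open subgroup $U\ni g$, without which $g^U$ would not be confined to a profinite group and the reduction would collapse.
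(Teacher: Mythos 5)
Your proposal is correct and takes essentially the same route as the paper: reduce to the profinite case of \thref{non-meagre} by placing $g$ in a compact open subgroup $U$ and writing $g^G=\bigcup_i x_i g^U x_i^{-1}$ over countably many coset representatives, which is exactly the mechanism of the paper's \thref{non-null}. The only cosmetic difference is the order of implications: the paper proves $(2)\Rightarrow(3)$ softly (a non-meagre $g^{K_i}$ is closed, hence has interior, hence is non-null) and only then invokes the profinite theorem via $(3)\Rightarrow(1)$, whereas you transport non-meagreness and non-nullity to $g^U$ symmetrically and apply \thref{non-meagre} to each.
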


Applying the above results and two deep theorems in profinite group theory, we eliminate t.d.l.c.s.c. groups as possible candidates for a positive answer to the motivating question.

\begin{thm}\thlabel{dense}
If $G$ is a non-trivial t.d.l.c.s.c. group and $g^G$ is dense, then $g^G$ is meagre and Haar null.
\end{thm}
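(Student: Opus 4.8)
The plan is to argue by contradiction, first using \thref{non-meagre} and the preceding corollary to convert the topological dichotomy into the single algebraic task of showing that a dense conjugacy class cannot be open, and then to invoke deep finiteness results for profinite groups of finite exponent to rule that out.

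First I would clear away the degenerate cases. If $G$ is discrete then a dense set is everything, so $g^G=G\ni e$ forces $g=e$ and $G=\{e\}$; if $G$ is compact (hence profinite) then for every open normal $V\trianglelefteq G$ the image of the dense set $g^G$ is all of $G/V$, so $gV$ is conjugate to $eV$, giving $g\in V$ for all $V$ and again $g=e$. Thus in the nontrivial case $G$ is neither discrete nor compact, and by van Dantzig it has an infinite compact open subgroup $U$. Since $g^G$ is dense it meets $U$; replacing $g$ by a conjugate lying in $U$ changes neither $g^G$ nor its topological properties, and now $cl(\langle g\rangle)\sleq U$ is compact. Hence the corollary applies to this $g$: the properties \emph{open}, \emph{non-meagre}, and \emph{non-null} coincide. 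Consequently it suffices to prove that $g^G$ is \emph{not} open, for then the corollary yields simultaneously that $g^G$ is meagre and Haar null.

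So suppose toward a contradiction that $g^G$ is open. By Effros' theorem the orbit map induces a homeomorphism $G/C_G(g)\cong g^G$, under which the $U$-suborbit $g^U$ corresponds to $UC_G(g)/C_G(g)$; as $U$ is open this set is open in $g^G$, hence open in $G$, and being compact it is clopen in $U$. Thus $g^U$ is an open conjugacy class in the profinite group $U$, so by \thref{non-meagre}(4) there is $M$ with $|C_{U/V}(gV)|\sleq M$ for all open normal $V\trianglelefteq U$; since $C_U(g)=\varprojlim_V C_{U/V}(gV)$, the centralizer $C_U(g)$ is finite. As $\langle g\rangle\sleq C_U(g)$, the element $g$ has finite order $n$. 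Every $G$-conjugate of $g$ then also has order $n$, so $g^G$ is contained in the closed set $\{x\in G\;|\;x^n=e\}$; as $g^G$ is dense this closed set is all of $G$, and $G$ has finite exponent $n$. In particular the modular homomorphism satisfies $\Delta(x)^n=1$, so $G$ is unimodular, and $C_G(g)$, meeting $U$ in the finite group $C_U(g)$, is discrete.

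At this point the two deep theorems enter. By Zelmanov's positive solution of the restricted Burnside problem the compact open subgroup $U$, a second countable profinite group of exponent $n$, is locally finite; quantitatively, every topologically finitely generated profinite group of exponent $n$ is finite. To exploit density I would record that, since $g^G$ is open and dense, it is comeagre, whence a standard Baire-category argument gives $G=g^G\cdot g^G$: every element of $G$ is a product of two conjugates of the finite-order element $g$. The aim is to feed this width-two covering, together with the uniform bound $M$ and the Nikolov--Segal theorem (finite-index subgroups of finitely generated profinite groups are open, controlling verbal and commutator structure), into a proof that $U$ is topologically finitely generated; being also of exponent $n$, $U$ would then be finite, contradicting that $U$ is infinite. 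This contradiction shows $g^G$ is not open and completes the proof.

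I expect the genuine obstacle to lie exactly in this last step, and for a structural reason: finite exponent alone does not forbid open conjugacy classes. In $U_0=\bigl(\prod_{\N}\mathbb{F}_{p^s}\bigr)\rtimes(\mathbb{Z}/q)$ with $q\mid p^s-1$ acting diagonally by a primitive $q$-th root of unity, the order-$q$ generator has a clopen conjugacy class, yet $U_0$ is infinite of exponent $pq$. Thus density must be used essentially, to constrain not merely $g$ but the infinitely many distinct $U$-conjugacy classes comprising the dense open set $g^G\cap U$; obtaining uniform control of their centralizers, and converting the ambient width-two covering into a finiteness statement about $U$, is where the profinite machinery will do the real work.
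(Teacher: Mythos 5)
Your reduction is sound and tracks the paper's own strategy up to a point: disposing of the discrete and compact cases, conjugating $g$ into a compact open subgroup $U$ so that the corollary makes \emph{open}, \emph{non-meagre}, and \emph{non-null} equivalent, assuming $g^G$ open for contradiction, deducing that $C_U(g)$ is finite, that $g$ is torsion of some order $n$, and that $G$ (hence $U$) has exponent $n$. (The detour through Effros' theorem is unnecessary --- \thref{non-null} gives $g^U$ open directly from $g^G$ non-null --- but it is not wrong.) The problem is that the proof stops exactly where the real work begins. Your final paragraph is a plan, not an argument: you propose to show $U$ is topologically finitely generated by combining the covering $G=g^G\cdot g^G$ with Nikolov--Segal, but Nikolov--Segal takes topological finite generation as a \emph{hypothesis}, so it cannot be used to establish it; and writing every element of $G$ as a product of two conjugates of $g$ gives no visible handle on generating the compact open subgroup $U$. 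There is no a priori reason for $U$ to be topologically finitely generated, and you yourself flag (correctly, with your example $U_0$) that finite exponent plus an open conjugacy class does not force finiteness of a profinite group. So the contradiction is never reached.

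The paper closes this gap with a different pair of tools. It applies Wilson's structure theorem for compact torsion groups (\thref{wil}) to $U$, obtaining a finite \emph{characteristic} series whose factors are pro-$p$ or Cartesian products of isomorphic finite simple groups; it then takes the largest $k$ with $U_k$ not open and uses density of $g^G$ to move $g$ into the open characteristic subgroup $U_{k+1}$, so that $U_{k+1}/U_k$ inherits an open conjugacy class. A pro-$p$ factor with an open conjugacy class is finite by the Frattini-subgroup argument plus Zel'manov (\thref{pro-p}), and a product $\prod_{i\in I}S_i$ of finite simple groups cannot have an element with finite centralizer unless $I$ is finite (\thref{keylem}), so in either case $U_k$ is open --- contradicting the choice of $k$. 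This is where density is used essentially (to land $g$ in the characteristic piece $U_{k+1}$), which is precisely the leverage your sketch identifies as necessary but does not supply. To repair your proof you would need either to import Wilson's theorem as the paper does, or to produce a genuinely different finiteness mechanism; as written, the argument is incomplete.
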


We conclude by presenting Hofmann's proof in the connected case to give a complete answer to the question of Kechris and Rosendal.

\begin{thm}[Hofmann]
A non-trivial connected locally compact group cannot have a dense conjugacy class.
\end{thm}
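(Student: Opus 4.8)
The plan is to reduce the statement to the case of connected Lie groups via the solution to Hilbert's fifth problem, and then to rule out dense conjugacy classes using continuous conjugation-invariant functions.

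First I would invoke the Gleason--Yamabe theorem: for a connected locally compact group $G$ and any neighborhood $U$ of the identity, there is a compact normal subgroup $K\trianglelefteq G$ with $K\subseteq U$ such that $G/K$ is a Lie group, necessarily connected. Since a non-trivial connected group is non-discrete and Hausdorff, I can choose $U\neq G$, and then $K\neq G$, so $G/K$ is a \emph{non-trivial} connected Lie group. If $g^G$ were dense, then its image under the continuous surjection $q\colon G\to G/K$ would be dense, and this image is exactly the conjugacy class $(gK)^{G/K}$. Thus it suffices to prove that a non-trivial connected Lie group $L$ has no dense conjugacy class.

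The engine for the Lie case is the observation that any continuous map $f$ on $L$ that is constant on each conjugacy class must be constant on $\overline{g^L}$; so if $g^L$ is dense, every such $f$ is globally constant. I would then argue by contradiction, assuming $g^L$ dense, and split on whether $L$ is topologically perfect. If $\overline{[L,L]}\neq L$, then since $hgh^{-1}=(hgh^{-1}g^{-1})g\in g\,[L,L]$ we get $g^L\subseteq g\,\overline{[L,L]}$, a coset of a proper closed subgroup, contradicting density. If instead $\overline{[L,L]}=L$, I would take $f(x)$ to be the characteristic polynomial of $\mathrm{Ad}(x)$, which is conjugation-invariant because $\mathrm{Ad}(hxh^{-1})=\mathrm{Ad}(h)\mathrm{Ad}(x)\mathrm{Ad}(h)^{-1}$. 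Density forces $f$ to equal its value at the identity, namely $(\lambda-1)^{\dim L}$, so every $\mathrm{Ad}(x)$ is unipotent; by Kolchin's theorem the group $\mathrm{Ad}(L)\cong L/Z(L)$ is then unipotent, hence nilpotent, whence $L$ itself is nilpotent. But a non-trivial nilpotent group has non-trivial abelianization and so is not perfect, contradicting $\overline{[L,L]}=L$.

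I expect the main obstacle to be this last, perfect case: one must pin down precisely why topological perfectness is incompatible with every adjoint element being unipotent, and the clean route is the chain ``constant characteristic polynomial $\Rightarrow$ all $\mathrm{Ad}(x)$ unipotent $\Rightarrow$ $\mathrm{Ad}(L)$ nilpotent $\Rightarrow$ $L$ nilpotent $\Rightarrow$ $L$ not perfect.'' A secondary technical point worth care is that $[L,L]$ need not be closed, which is why the argument is phrased throughout in terms of $\overline{[L,L]}$ and why ``perfect'' is read as $\overline{[L,L]}=L$.
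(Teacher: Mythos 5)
Your argument is correct in outline and reaches the theorem by a genuinely different route in its core step, so it is worth comparing the two. The reduction to a non-trivial connected Lie group is essentially the paper's (the paper cites the pro-Lie structure from Hilbert's fifth problem and picks a proper closed normal $N$ with $G/N$ Lie; your Gleason--Yamabe phrasing with a small compact normal $K$ makes the non-triviality of the quotient slightly more explicit). Both proofs then pass to $\mathrm{Ad}$ and both begin the linear-group analysis identically: the characteristic polynomial of $\mathrm{Ad}(x)$ is a continuous conjugation-invariant function, so density forces it to be $(\lambda-1)^n$ everywhere and every $\mathrm{Ad}(x)$ is unipotent. From there the paths diverge. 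The paper runs a trace computation ($\mathrm{Tr}(C(B-\mathrm{Id}))=0$), invokes Burnside's theorem on a minimal invariant subspace to conclude the group acts trivially there, and then climbs an explicit flag $E_1<\dots<E_n$ with a hands-on commutator calculation to kill the group; this is self-contained modulo Burnside and proves the stronger standalone Lemma that a closed subgroup of $GL_n(\mathbb{C})$ with a dense conjugacy class is trivial. You instead invoke Kolchin's theorem to get $\mathrm{Ad}(L)\cong L/Z(L)$ nilpotent, hence $L$ nilpotent, and close the loop with the dichotomy on $\overline{[L,L]}$; this is shorter and replaces the linear algebra with a classical black box of comparable weight, at the cost of not isolating the reusable linear-group statement.

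One small point to patch: in the perfect case you conclude ``non-trivial nilpotent $\Rightarrow$ not perfect,'' but your running hypothesis is the \emph{topological} perfectness $\overline{[L,L]}=L$, which is a priori weaker than $[L,L]=L$, so non-trivial abelianization alone does not immediately contradict it. The fix is one line: by continuity of the commutator map, $[L,\overline{A}]\subseteq\overline{[L,A]}$, so $\overline{\gamma_2(L)}=L$ forces $\overline{\gamma_k(L)}=L$ for all $k$, which is incompatible with nilpotency unless $L$ is trivial. With that inserted the argument is complete.
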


\begin{thm} 
A non-trivial locally compact second countable group does not admit a comeagre conjugacy class. 
\end{thm}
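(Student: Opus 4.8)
\section*{Proof proposal}

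The plan is to assume $G$ is a non-trivial l.c.s.c. group carrying a comeagre conjugacy class $g^G$ and to derive a contradiction by peeling off the identity component and invoking the connected and totally disconnected cases already established. First I would record two soft facts about the group $G$, which is Polish and hence Baire: a comeagre set is dense (its complement is meagre, hence has dense complement), and a comeagre set is non-meagre (otherwise the whole space would be meagre). Thus $g^G$ is simultaneously dense and non-meagre. The strategy is then to quotient by the identity component $G^{\circ}$, a closed normal subgroup.

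Let $q\colon G\to G/G^{\circ}$ be the quotient map; it is a continuous open surjection, and $H:=G/G^{\circ}$ is a t.d.l.c.s.c.\ group (the quotient of an l.c.s.c.\ group by its identity component is totally disconnected, locally compact, and second countable, since $q$ is open). The key step is to show that the conjugacy class $C:=q(g)^{H}$ is comeagre in $H$. Computing the preimage, $q^{-1}(C)=\{x\in G: q(x)\in q(g)^{H}\}=\bigcup_{h\in G} hgh^{-1}G^{\circ}\supseteq g^G$, so $q^{-1}(C)$ is comeagre in $G$. To transfer this downstairs I would use that continuous open surjections preserve category in both directions. If $F\subseteq H$ is closed and nowhere dense, then $q^{-1}(F)$ is closed with empty interior (an interior point would map, by openness of $q$, to an interior point of $F$), so preimages of meagre sets are meagre. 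Conversely, if $B\subseteq H$ has the Baire property and is non-meagre, then $B$ is comeagre in some nonempty open $V$, whence $q^{-1}(B)\supseteq q^{-1}(V)\setminus(\text{a meagre set})$ is non-meagre because $q^{-1}(V)$ is nonempty open. Since $C$ is a continuous image of $H$ under $h\mapsto h\,q(g)\,h^{-1}$, it is analytic and thus has the Baire property, and so does $C^{c}$; applying the converse direction to $C^{c}$ (whose preimage is meagre) forces $C^{c}$ to be meagre, i.e.\ $C$ is comeagre.

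With $C=q(g)^{H}$ comeagre in $H$, it is in particular dense and non-meagre. If $H$ were non-trivial, then \thref{dense} would force the dense conjugacy class $C$ to be meagre, a contradiction; hence $H=G/G^{\circ}$ is trivial and $G=G^{\circ}$ is connected. But $G$ is non-trivial and connected, so Hofmann's theorem says $G$ admits no dense conjugacy class, contradicting the density of $g^G$. This contradiction shows no such $G$ exists, completing the proof.

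I expect the main obstacle to be the category-transfer step: verifying cleanly that comeagreness of $q^{-1}(C)$ descends to comeagreness of $C$. The pullback-of-nowhere-dense direction is routine, but the descent genuinely requires the Baire property of the conjugacy class $C$ (supplied by its analyticity) together with the openness of $q$. Once this reduction is in place, the two structural theorems close the argument immediately.
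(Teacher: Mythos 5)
Your proof is correct, and it follows the paper's overall strategy: reduce modulo $G^{\circ}$ and then invoke Hofmann's theorem (\thref{comeagre}) together with the totally disconnected result. The one place you genuinely diverge is the step you yourself flag as the crux, namely descending comeagreness of the conjugacy class to $G/G^{\circ}$. The paper sidesteps the Baire-property/category-transfer machinery entirely by first upgrading ``comeagre'' to ``open'': since $G$ is $K_{\sigma}$, write $G=\bigcup_{i\in\N}K_i$ with each $K_i$ compact, so $g^G=\bigcup_{i\in\N}g^{K_i}$; by the Baire category theorem some $g^{K_i}$ is non-meagre, and $g^{K_i}$ is closed (a continuous image of a compact set), hence has non-empty interior, whence $g^G$ is open as well as dense. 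The image of an open dense set under the open quotient map is open and dense, hence comeagre, and the two structural theorems finish exactly as in your argument. Your route --- pulling back the complement of $C$ along the continuous open surjection and using that the analytic set $C$ has the Baire property --- is valid and somewhat more robust, since it never needs openness of the class and would transfer comeagreness through any quotient by a closed normal subgroup; the cost is heavier descriptive-set-theoretic input. The paper's openness upgrade is the same trick already used in the proof of \thref{mcat}, and it buys a completely elementary descent to the quotient.
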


Along the way, we present an example due to Rosendal showing the existence of a infinite profinite group with a non-meagre conjugacy class, and we sketch the example built by E. Akin, E. Glasner, and B. Weiss of a non-trivial t.d.l.c.s.c. group with a dense conjugacy class.

\begin{ack} 
The results herein form part of the author's thesis work; the author thanks his thesis adviser Christian Rosendal. The author also thanks the Hausdorff Research Institute for Mathematics in Bonn for its hospitality; this paper was brought to a polished form during a stay at the Institute. The author finally thanks the anonymous referee for his or her detailed remarks.
\end{ack}

\section{Preliminaries} 
We fix a few conventions and notations for this paper. All topological groups are assumed to be Hausdorff, and all subgroups are taken to be closed unless otherwise noted. To indicate $O$ is an open subgroup of a topological group $G$, we write $O\sleq_oG$. For $g\in G$ and $A\subseteq G$, $g^A:=\{aga^{-1}\;|\;a\in A\}$. We use the abbreviations l.c., t.d., and s.c., for ``locally compact", ``totally disconnected", and ``second countable", respectively.

\subsection{L.c.s.c. groups}
L.c.s.c. groups are $K_{\sigma}$ and metrizable by classical results \cite{K95}. These groups have a canonical left invariant Borel measure which is unique up to constant multiples called the \emph{Haar measure}. When a subset of a group is said to be \emph{non-null}, we mean with respect to the Haar measure. A familiarity with the Haar measure and basic properties thereof is assumed; \cite{DE09} contains a nice, brief introduction.\par

\indent L.c.s.c. groups are connected-by-totally disconnected. Indeed, for $G$ a l.c.s.c. group let $G^{\circ}$ denote the connected component of the identity. It is easy to see $G^{\circ}$ is a closed normal subgroup and there is a short exact sequence of topological groups
\[
1\rightarrow G^{\circ}\rightarrow G\rightarrow G/G^{\circ}\rightarrow 1
\]
where $G^{\circ}$ is connected and $G/G^{\circ}$ is totally disconnected. The study of l.c.s.c. groups thus reduces to the study of connected l.c.s.c. groups and t.d.l.c.s.c. groups. \par

\indent The study of connected locally compact groups reduces to the study of inverse limits of Lie groups by the celebrated solution to Hilbert's fifth problem.

\begin{thm}[Gleason, Montgomery, Yamabe, Zippin, see \cite{MZ55}]\thlabel{hilberts5}
A connected locally compact group is pro-Lie. 
\end{thm}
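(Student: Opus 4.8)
The plan is to prove the generalised Hilbert fifth problem by establishing the two pillars of Gleason--Yamabe structure theory: the \emph{no small subgroups} (NSS) criterion for being a Lie group, and a reduction exhibiting every connected locally compact group as an inverse limit of NSS quotients. Recall a pro-Lie group is an inverse limit of Lie groups, so for connected locally compact $G$ it suffices to show that for every neighbourhood $U$ of the identity there is a compact normal subgroup $N\sleq U$ with $G/N$ a Lie group; then $G\cong\varprojlim_N G/N$ over the directed family of such $N$. First I would carry out this reduction and fix terminology: $G$ has \emph{no small subgroups} if some identity neighbourhood contains no nontrivial subgroup.

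The first pillar is the Gleason--Montgomery--Zippin theorem that a locally compact NSS group is a Lie group, which I would prove by constructing an exponential chart. The key analytic input is Gleason's lemma: in an NSS group one builds a left-invariant \emph{Gleason metric}, i.e. a metric $d$ comparable to a norm $\|\cdot\|$ near the identity satisfying an escape estimate (one-parameter subgroups leave a fixed neighbourhood at a rate controlled by $\|\cdot\|$) together with a commutator estimate $\|[g,h]\|\lesssim\|g\|\,\|h\|$. Such a norm is manufactured from the left regular representation by averaging suitable bump functions against Haar measure. With the Gleason metric in hand, one shows that continuous one-parameter subgroups $\mathbb{R}\to G$ exist in abundance, that the map sending such a subgroup to its value at time $1$ is a local homeomorphism onto a neighbourhood of the identity, and that the Baker--Campbell--Hausdorff-type expansions afforded by the commutator estimate endow the image with a smooth, indeed analytic, group structure, exhibiting $G$ as a Lie group.

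The second pillar is the reduction to the NSS case: for connected locally compact $G$ and any identity neighbourhood $U$ there is a compact normal $N\sleq U$ with $G/N$ NSS, hence Lie. Here I would combine the Peter--Weyl machinery with convolution, using approximate identities built from Haar measure on a compact generating piece to produce, after averaging over conjugation, small conjugation-invariant neighbourhoods; a maximality and compactness argument then extracts from these a compact normal subgroup inside $U$ whose quotient has no small subgroups. Feeding this into the first pillar yields arbitrarily small compact normal subgroups with Lie quotient, and the inverse-limit presentation completes the argument.

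The main obstacle is the construction of the Gleason metric and the passage from its metric inequalities to an honest Lie structure: establishing escape and commutator norms in a group carrying no a priori differentiable structure is delicate, and converting these estimates into the statement that the exponential map is a chart under which multiplication is smooth is the technical heart of the matter. The reduction to NSS quotients, while itself nontrivial, is comparatively soft once the function-theoretic approximate identities are in place. For this reason the result is invoked as the cited solution of Hilbert's fifth problem rather than reproved here.
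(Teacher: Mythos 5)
The paper offers no proof of this statement at all: it is invoked as the classical solution to Hilbert's fifth problem and cited directly to Montgomery--Zippin \cite{MZ55}. Your proposal is an accurate roadmap of the standard Gleason--Yamabe argument (NSS groups are Lie; every connected locally compact group is an inverse limit of NSS, hence Lie, quotients by arbitrarily small compact normal subgroups), and since you likewise defer the technical heart --- the Gleason metric, escape and commutator estimates, and the passage to an analytic structure --- to the cited literature rather than carrying it out, your treatment is essentially the same as the paper's.
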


A group $G$ is a \emph{Lie group} if $G$ has an analytic $\mathbb{K}$-manifold structure such that the map $(g,h)\mapsto gh^{-1}$ is analytic where $\mathbb{K}$ is either $\mathbb{R},\mathbb{C}$ or some non-discrete ultrametric field. If a Lie group $G$ is connected and locally compact, then $\mathbb{K}$ is either $\mathbb{R}$ or $\mathbb{C}$ and $G$ is finite dimensional. Associated to a Lie group $G$ is the \emph{Lie algebra}, $\mathfrak{g}$, where $\mathfrak{g}$ is the tangent space at the identity along with a bracket operation. The self action of $G$ by conjugation induces an action of $G$ on $\mathfrak{g}$ by vector space isomorphisms. When $G$ is connected and locally compact, this action is given by the map $Ad:G\rightarrow GL(\mathfrak{g})$ where $GL(\mathfrak{g})=GL_n(\mathbb{K})$ for some finite $n$ and $\mathbb{K}$ equal to $\mathbb{R}$ or $\mathbb{C}$. 

\begin{fact}[{\cite[III.6.4 Corollary 4]{Bo98}}]\thlabel{adjoint}
If $G$ is a connected locally compact Lie group, then $Ad:G/Z(G)\rightarrow im(Ad)$ is an isomorphism of Lie groups.
\end{fact}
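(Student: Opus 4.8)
The plan is to reduce the statement to two ingredients: identifying the kernel of $Ad$ with the center $Z(G)$, and then invoking the first isomorphism theorem for Lie groups together with the principle that a bijective immersion of equidimensional Lie groups is an isomorphism.

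First I would show $\ker(Ad)=Z(G)$. The inclusion $Z(G)\subseteq\ker(Ad)$ is immediate: if $g$ is central, the conjugation map $c_g\colon x\mapsto gxg^{-1}$ is the identity on $G$, so its differential at the identity, which is by definition $Ad(g)$, is the identity on $\mathfrak{g}$. For the reverse inclusion, suppose $Ad(g)=\mathrm{id}_{\mathfrak{g}}$. The map $c_g$ is an automorphism of $G$ with $d(c_g)_e=Ad(g)=\mathrm{id}$, and by naturality of the exponential map $c_g(\exp X)=\exp(Ad(g)X)=\exp X$ for every $X\in\mathfrak{g}$, so $c_g$ agrees with $\mathrm{id}_G$ on $\exp(\mathfrak{g})$. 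Since $G$ is connected, $\exp(\mathfrak{g})$ generates $G$, and as both $c_g$ and $\mathrm{id}_G$ are homomorphisms they coincide; hence $g\in Z(G)$. This is the one step where connectedness is essential, and it is the principal content of the result.

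With the kernel identified, $Z(G)$ is a closed normal subgroup, so $G/Z(G)$ carries a Lie group structure and $Ad$ factors through an injective homomorphism $\overline{Ad}\colon G/Z(G)\to GL(\mathfrak{g})$ with image $\mathrm{im}(Ad)$. I would then verify that $\overline{Ad}$ is an immersion. Its differential at the identity is the map $\mathfrak{g}/\mathfrak{z}(\mathfrak{g})\to\mathfrak{gl}(\mathfrak{g})$ induced by $\mathrm{ad}=d(Ad)_e$; this is injective because $\ker(\mathrm{ad})=\mathfrak{z}(\mathfrak{g})$, which is precisely $\mathrm{Lie}(Z(G))$ (apply the exponential computation above to the one-parameter subgroups $t\mapsto\exp(tX)$). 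By left translation the rank is constant, so $\overline{Ad}$ is an immersion everywhere.

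Finally I would give $\mathrm{im}(Ad)$ the structure of the connected integral (immersed) Lie subgroup of $GL(\mathfrak{g})$ with Lie algebra $\mathrm{ad}(\mathfrak{g})$; this is legitimate by Lie's correspondence between subalgebras and connected subgroups, and $\mathrm{im}(Ad)$ coincides with it since both are generated by $\exp$ of $\mathrm{ad}(\mathfrak{g})$. Because $G$ is connected and locally compact, the excerpt records that it is finite dimensional over $\mathbb{R}$ or $\mathbb{C}$, so $G/Z(G)$ and $\mathrm{im}(Ad)$ are finite-dimensional of equal dimension. Then $\overline{Ad}$ is a bijective immersion whose differential at the identity is a linear isomorphism onto the Lie algebra of the target; hence it is a local diffeomorphism, hence a covering map, and being a bijection it is a diffeomorphism, i.e.\ an isomorphism of Lie groups. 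The main obstacle here is not the algebra but the bookkeeping of topologies: $\mathrm{im}(Ad)$ need not be closed in $GL(\mathfrak{g})$, so one must equip it with the immersed Lie structure transported from $G/Z(G)$ rather than the subspace structure, and confirm that this is the structure intended by the statement.
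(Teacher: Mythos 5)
The paper does not actually prove this statement: it is imported as a Fact, with a citation to Bourbaki \cite[III.6.4 Corollary 4]{Bo98}, and is used as a black box only in the proof of Hofmann's theorem. So there is no in-paper argument to compare against; what can be assessed is whether your blind proof is sound, and it is. Your three steps are the standard route: (i) $\ker(Ad)=Z(G)$ via naturality of $\exp$ and the fact that $\exp(\mathfrak{g})$ generates a connected group (this is indeed where connectedness enters); (ii) the induced map on $G/Z(G)$ is an immersion because $\mathrm{Lie}(Z(G))=\mathfrak{z}(\mathfrak{g})=\ker(\mathrm{ad})$; (iii) $im(Ad)$, given its canonical structure as the connected integral subgroup of $GL(\mathfrak{g})$ with Lie algebra $\mathrm{ad}(\mathfrak{g})$, has the same finite dimension as $G/Z(G)$ (finite-dimensionality being guaranteed, as you note, by the paper's standing observation that a connected locally compact Lie group is a finite-dimensional real or complex Lie group), so the bijective immersion is an isomorphism. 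You are also right that the only genuine subtlety is topological: $im(Ad)$ need not be closed in $GL(\mathfrak{g})$, so the statement is about the immersed (integral-subgroup) structure, which is what Bourbaki intends. Two minor points of care: your closing sentence wavers between equipping $im(Ad)$ with the integral-subgroup structure and with the structure ``transported from $G/Z(G)$'' --- under the latter reading the claim is a tautology, so the content of your proof is precisely that these two structures coincide, which your equidimensional bijective-immersion argument does establish; and the smoothness of the induced map \emph{into} the integral subgroup (rather than into $GL(\mathfrak{g})$) implicitly uses that integral subgroups are initial submanifolds, a standard fact that deserves an explicit citation if this were written out in full.
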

\noindent Proofs of the aforementioned properties of Lie groups may also be found in \cite{Bo98}.\par

\indent For t.d.l.c. groups, a central theorem is an old result of D. van Dantzig.

\begin{thm}[van Dantzig {\cite[II.7.7]{HR79}}]\thlabel{vanDantzig}
A t.d.l.c. group admits a basis at the identity of compact open subgroups.
\end{thm}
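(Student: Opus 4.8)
The plan is to produce, inside an arbitrary neighbourhood of the identity, first a compact open \emph{neighbourhood} and then a compact open \emph{subgroup}; total disconnectedness is used only to manufacture the former, while continuity of multiplication and compactness convert it into the latter.

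I would begin with the point-set input: it is a standard fact that in a compact Hausdorff space the connected component of a point coincides with its quasi-component, i.e.\ with the intersection of the clopen sets containing it. Any compact subspace $K$ of the totally disconnected group $G$ is itself compact Hausdorff and totally disconnected, so its components are singletons; hence $K$ has a basis of clopen sets, and for each $x\in K$ with $x\neq e$ there is a set clopen in $K$ containing $e$ but not $x$.

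Next, fix a neighbourhood $N$ of $e$. Local compactness and the Hausdorff property give a compact neighbourhood $K\subseteq N$ of $e$ with $e\in\mathrm{int}(K)$. The boundary $\partial K=K\setminus\mathrm{int}(K)$ is compact and omits $e$, so for each $x\in\partial K$ I choose $C_x$ clopen in $K$ with $e\in C_x$ and $x\notin C_x$. The sets $K\setminus C_x$ cover $\partial K$; taking a finite subcover indexed by $x_1,\dots,x_n$ and setting $C=\bigcap_{i=1}^n C_{x_i}$ yields a set clopen in $K$ that contains $e$ and misses $\partial K$, so $C\subseteq\mathrm{int}(K)$. As a closed subset of the compact set $K$, the set $C$ is compact; as a subset of $\mathrm{int}(K)$ that is open in $K$, it is open in $G$. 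Thus $C$ is a compact open neighbourhood of $e$ contained in $N$.

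Finally, I would upgrade a compact open neighbourhood $U$ of $e$ to a subgroup. By continuity of multiplication, for each $u\in U$ there are open neighbourhoods $V_u\ni e$ and $W_u\ni u$ with $V_uW_u\subseteq U$; compactness of $U$ gives a finite subcover $W_{u_1},\dots,W_{u_n}$, and with $W=U\cap\bigcap_{i=1}^n V_{u_i}$ the set $V=W\cap W^{-1}$ is a symmetric open neighbourhood of $e$ satisfying $V\subseteq U$ and $VU\subseteq U$. The subgroup $H=\bigcup_{n\geq1}V^n$ generated by $V$ is then open, hence closed; and since $V\subseteq U$, induction using $V^{n+1}\subseteq VU\subseteq U$ gives $H\subseteq U$, so $H$ is a closed subset of the compact set $U$ and is therefore compact. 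Combining the two constructions, every neighbourhood of $e$ contains a compact open subgroup, so these subgroups form the required basis. The crux is the middle step: producing a set that is genuinely open in $G$---rather than merely clopen in a compact neighbourhood---which is precisely where total disconnectedness, through the clopen basis of a compact Hausdorff space, is indispensable.
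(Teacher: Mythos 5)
Your proof is correct and complete. Note that the paper itself gives no proof of this statement---it is quoted as a classical theorem with a citation to Hewitt--Ross [II.7.7]---and your argument is precisely the standard one found in such references: first use total disconnectedness (via the coincidence of components and quasi-components in a compact Hausdorff space) together with local compactness to place a compact open neighbourhood $C$ inside a given neighbourhood of the identity, then use continuity of multiplication and compactness to find a symmetric open $V\ni e$ with $VC\subseteq C$, so that the subgroup generated by $V$ is open, contained in $C$, and hence compact. All the delicate points (that $C$ is open in $G$ and not merely in $K$, and that the generated subgroup stays inside $C$ by induction) are handled correctly.
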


Elements of a t.d.l.c. group that lie in a compact open subgroup are called \emph{periodic}. Note that periodic elements do not necessarily have finite order; we call an element \emph{torsion} if it has finite order.  For a t.d.l.c. group $G$, we denote the collection of periodic elements of $G$ by $P_1(G)$. It is easy to see
\[
P_1(G)=\{g\in G\;|\; cl(\langle g \rangle)\text{ is compact }\}.
\]

\subsection{Profinite groups} The compact open subgroups of a t.d.l.c. group given by van Dantzig's theorem are \emph{profinite}. I.e. they are inverse limits of finite groups. An introduction to the theory of profinite groups may be found in the texts \cite{RZ00} and \cite{Wil98}. We assume a familiarity with profinite groups and merely recall a few relevant definitions and theorems.\par

\indent Profinite groups have a basis at $1$ consisting of open normal subgroups. For a second countable profinite group $U$, we say $(N_i)_{i\in \mathbb{N}}$ is a \emph{normal basis at $1$} for $U$ if each $N_{i}$ is open and normal in $U$, $\bigcap_{i \in \mathbb{N}}N_{i}=\{1\}$, and $(N_{i})_{i\in \N}$ is $\subseteq$-decreasing.\par

\indent The \emph{Frattini subgroup} of a profinite group $U$, denoted $\Phi(U)$, is the intersection of all maximal proper open subgroups. The Frattini subgroup is the collection of \emph{non-generators} of $U$: $x\in U$ is a non-generator if whenever $U=\ol{\left<X\cup \{x\}\right>}$ for $X\subseteq U$, then $U=\ol{\left<X\right>}$. This implies a useful observation: If $H\sleq U$ is closed and $H\Phi(U)=U$, then $H=U$. We note an easy and illuminating proof: Suppose $H\sleq U$ is closed and $H\Phi(U)=U$. If $H\neq U$, then there is some maximal proper open $W\lneq U$ such that $H\sleq W$. However, then $H\Phi(U)\sleq W$ contradicting that $H\Phi(U)=U$. We thus conclude $H=U$.\par

\indent  When $U$ is pro-$p$, an inverse limit of $p$-groups, $\Phi(U)$ has a well understood structure.

\begin{fact}[{\cite[Lemma 2.8.7]{RZ00}} ]\thlabel{frat}
If $U$ is pro-$p$, then $\Phi(U)=U^p[U,U]$ where $[U,U]$ is the closure of the commutator subgroup and $U^p$ is the closed subgroup generated by all $p$ powers.
\end{fact}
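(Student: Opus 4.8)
The plan is to prove the two inclusions $U^p[U,U]\sleq \Phi(U)$ and $\Phi(U)\sleq U^p[U,U]$ separately, writing $V:=U^p[U,U]$ for the closed subgroup in question. Since $U^p$ and the closure of the commutator subgroup are each preserved by every continuous automorphism of $U$, the subgroup $V$ is closed and normal, and the quotient $U/V$ is an abelian pro-$p$ group of exponent $p$. Having this quotient in hand is the conceptual core of both inclusions.

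For the inclusion $V\sleq \Phi(U)$, the key claim is that every maximal proper open subgroup $W$ of $U$ is normal of index $p$. First I would pass to the core $N:=\bigcap_{u\in U}uWu^{-1}$; since $W$ is open in the compact group $U$ it has finite index, so $N$ is open and normal, and $U/N$ is a finite $p$-group in which $W/N$ is a maximal subgroup. Invoking the standard fact that maximal subgroups of finite $p$-groups are normal of index $p$ (their normalizer strictly grows, forcing normality, and the simple quotient is cyclic of order $p$), I conclude $W\trianglelefteq U$ with $[U:W]=p$. Then $U/W\cong \mathbb{Z}/p\mathbb{Z}$ is abelian of exponent $p$, so every $p$-power and every commutator of $U$ lies in $W$, whence $V\sleq W$. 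As this holds for each maximal proper open $W$, intersecting yields $V\sleq \Phi(U)$.

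For the reverse inclusion $\Phi(U)\sleq V$, I would analyze $U/V$, which is a profinite elementary abelian $p$-group, i.e. an inverse limit of finite-dimensional $\mathbb{F}_p$-vector spaces. The crucial point is that the intersection of all maximal open subgroups of $U/V$ is trivial: given a nonidentity coset $xV$, it has nontrivial image in some finite quotient, and since that quotient is a finite $\mathbb{F}_p$-vector space there is an $\mathbb{F}_p$-linear functional not vanishing on the image; pulling back the kernel of this functional produces a maximal open subgroup of $U/V$ that omits $xV$. Via the correspondence between open subgroups of $U/V$ and open subgroups of $U$ containing $V$, this shows the maximal open subgroups of $U$ containing $V$ intersect in exactly $V$. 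Since $\Phi(U)$ is the intersection over all maximal proper open subgroups, it is contained in the intersection over merely those containing $V$, giving $\Phi(U)\sleq V$.

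The main obstacle is establishing these two separating properties: that maximal open subgroups have index $p$, and that the continuous $\mathbb{F}_p$-linear functionals on $U/V$ separate points. The former rests on the core reduction to finite $p$-groups together with the elementary $p$-group fact; the latter on recognizing $U/V$ as a topological $\mathbb{F}_p$-vector space whose continuous dual is point-separating. The remaining verifications — that $V$ is closed and normal, that $U/V$ has exponent $p$, and that the subgroup correspondence through the quotient map is order-preserving — are routine.
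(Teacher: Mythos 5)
The paper does not prove this fact at all---it is quoted directly from Ribes--Zalesskii \cite[Lemma 2.8.7]{RZ00}---so there is no internal proof to compare against. Your argument is correct and is essentially the standard textbook one: the core reduction showing every maximal proper open subgroup of a pro-$p$ group is normal of index $p$ gives $U^p[U,U]\sleq\Phi(U)$, and viewing $U/U^p[U,U]$ as a topological $\mathbb{F}_p$-vector space whose continuous functionals separate points gives the reverse inclusion; both halves are sound, including the routine points (closedness of the product $U^p[U,U]$ via compactness and normality of $[U,U]$, and the fact that index-$p$ open subgroups are maximal).
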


We also note two deep results in profinite group theory.

\begin{thm}[Zel'manov \cite{Z92}] \thlabel{zel}
Every torsion pro-$p$ group is locally finite.
\end{thm}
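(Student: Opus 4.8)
The statement is genuinely Zel'manov's theorem, so rather than a self-contained argument the plan is to isolate where the depth lies and to exhibit how the profinite topology supplies hypotheses that are false for abstract groups. Since \emph{locally finite} means that every topologically finitely generated closed subgroup is finite, I would first reduce to the case that $U$ is topologically generated by finitely many elements, say $d$ of them, and prove that such a $U$ is finite. The target is a bound on $|U|$ depending only on $d$ and the exponent of $U$.

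The first genuine step is to show that $U$ has finite exponent. Writing $U=\bigcup_{k\geq 1}S_k$ with $S_k=\{g\in U\mid g^{p^k}=1\}$, each $S_k$ is closed because $g\mapsto g^{p^k}$ is continuous, and $U$ is compact, hence a Baire space; so some $S_m$ has nonempty interior, and a further argument promotes this to a uniform bound $g^{p^M}=1$ for all $g\in U$. This is exactly the place where compactness is indispensable: for abstract groups ``torsion'' is far weaker, as the Golod--Shafarevich infinite finitely generated torsion $p$-groups show, and correspondingly the pro-$p$ completions of those examples fail to be torsion, so they do not contradict \thref{zel}.

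With finite exponent $p^M$ in hand, every continuous finite quotient of $U$ is a $d$-generated finite $p$-group of exponent dividing $p^M$. By Zel'manov's solution to the restricted Burnside problem \cite{Z92}, there is a universal bound $B(d,p^M)$ on the order of any such finite group; since $U$ is the inverse limit of these quotients and their orders are bounded, the inverse system stabilizes and $|U|\leq B(d,p^M)$, so $U$ is finite. This completes the reduction modulo the restricted Burnside bound.

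The main obstacle is of course that bound itself, which I do not expect to reprove. Its proof passes to the associated graded restricted Lie algebra $L(U)$ over $\mathbb{F}_p$ furnished by the Jennings--Zassenhaus filtration; the finite exponent condition forces $L(U)$ to satisfy a polynomial identity and to be generated by finitely many ad-nilpotent (Engel) elements. The heart of the matter is then Zel'manov's theorem that a Lie algebra satisfying a polynomial identity and generated by finitely many ad-nilpotent elements is nilpotent, established through the structure theory of sandwich algebras. It is this Lie-theoretic nilpotency result, rather than any of the group-theoretic reductions above, that constitutes the real content of \thref{zel}.
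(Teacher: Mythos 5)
First, note that the paper offers no proof of \thref{zel}: it is one of the two external ``deep results'' (alongside Wilson's theorem) imported as a black box and used exactly once, in the proof of \thref{pro-p}. So there is no internal argument to compare yours against; what can be assessed is whether your sketch faithfully reflects the literature proof, and there is one genuine gap. The step ``a further argument promotes this to a uniform bound $g^{p^M}=1$ for all $g\in U$'' does not exist in any elementary form. The Baire category argument yields only a coset $xV$ of an open subgroup all of whose elements have order dividing some $p^m$, and that hypothesis alone does not control the exponent of $V$, let alone of $U$: in the pro-$2$ infinite dihedral group $\mathbb{Z}_2\rtimes C_2$ the nontrivial coset of $\mathbb{Z}_2$ consists entirely of involutions while $\mathbb{Z}_2$ is torsion-free. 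It is true that compact torsion groups have finite exponent, but the known proofs of that fact are consequences of Zel'manov's and Wilson's theorems rather than inputs to them, so the route ``establish finite exponent, then invoke the restricted Burnside bound $B(d,p^M)$'' is circular as written --- even though your final step (all continuous finite quotients have order at most $B(d,p^M)$, hence $|U|\sleq B(d,p^M)$) would be fine if the exponent bound were available.

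The actual argument bypasses the global exponent claim: Wilson and Zel'manov show that the existence of a single coset $xV$ of bounded exponent already forces the graded restricted Lie algebra $L(U)$ arising from the Zassenhaus filtration to satisfy a polynomial identity, while the torsion hypothesis makes the images of the finitely many topological generators ad-nilpotent; Zel'manov's Lie-theoretic nilpotency theorem then gives $L(U)$ nilpotent, whence $U$ is $p$-adic analytic, and a torsion $p$-adic analytic pro-$p$ group is finite. Your last paragraph correctly locates the depth in that Lie-theoretic nilpotency result, and your remarks about Golod--Shafarevich groups and the indispensability of compactness are accurate; what needs repair is only the intermediate reduction, which should pass from the Baire-category coset directly to the polynomial identity rather than through a global exponent bound and the restricted Burnside problem.
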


\begin{thm}[Wilson \cite{Wil83}]\thlabel{wil} 
Let $U$ be a compact Hausdorff torsion group. Then $U$ has a finite series 
\[
 \{1\}=U_0\sleq U_1\sleq\dots\sleq U_n=U
 \]
of closed characteristic subgroups in which each factor $U_i/U_{i-1}$ is either \textbf{(1)} pro-$p$ for some prime $p$ or \textbf{(2)} isomorphic to a Cartesian product of isomorphic finite simple groups.
\end{thm}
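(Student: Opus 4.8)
The plan is to reduce to the profinite case and then transfer the structure theory of finite groups through an inverse limit, the central difficulty being to force the resulting series to be finite. First I would show that a compact Hausdorff torsion group $U$ is profinite. The identity component $U^{\circ}$ is a closed normal subgroup, and $U/U^{\circ}$ is a compact totally disconnected group, hence profinite by \thref{vanDantzig}; so it suffices to see $U^{\circ}=\{1\}$. A non-trivial compact connected group contains elements of infinite order (it admits a non-trivial torus, and $\mathbb{R}/\mathbb{Z}$ already has infinite-order elements), contradicting that $U$ is torsion. Thus $U^{\circ}=\{1\}$, and we may write $U=\varprojlim U/V$ over the open normal $V\trianglelefteq U$, each quotient being finite.

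Next I would invoke the structure theory of finite groups at each quotient $U/V$. A finite group $G$ has a chief series whose factors are elementary abelian $p$-groups or direct products of isomorphic non-abelian finite simple groups, and the organizing tool is the generalized Fitting subgroup $F^{*}(G)=F(G)E(G)$, which is self-centralizing ($C_{G}(F^{*}(G))\sleq F^{*}(G)$). This forces $G/F^{*}(G)$ to embed in $\mathrm{Out}(F^{*}(G))$, and the Schreier property (a consequence of the classification of finite simple groups) makes the outer automorphism groups of the simple composition factors soluble. Grouping the $p$-factors and coalescing products of isomorphic simple groups, one extracts at each finite level a characteristic series whose factors are of the two desired shapes.

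I would then assemble a series for $U$ by setting, for each level, $U_{i}=\varprojlim (U_{i}V)/V$, the closed characteristic subgroup obtained as the inverse limit of the compatible finite-level terms; compactness guarantees consistency and that each factor $U_{i}/U_{i-1}$ is pro-$p$ or a Cartesian product of isomorphic finite simple groups. Here the torsion hypothesis does essential work: exactly as a compact torsion \emph{abelian} group can involve only finitely many primes (an infinite Cartesian product of groups of distinct prime order produces elements of infinite order), torsion sharply restricts the primes and exponents that can occur, while \thref{zel} controls the pro-$p$ pieces through local finiteness.

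The step I expect to be the main obstacle is bounding the length of the series independently of $V$, so that the inverse limit is genuinely a \emph{finite} series rather than an increasing union of characteristic subgroups. Controlling this length is precisely where the classification of finite simple groups and the Schreier property enter decisively: they bound the ``height'' contributed by the non-abelian simple factors and prevent the characteristic series of the finite quotients from lengthening without bound as $V$ shrinks. Checking that the factor types survive the inverse limit, and that the resulting subgroups are closed and characteristic in $U$, would then finish the argument.
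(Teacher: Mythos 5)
The paper does not actually prove this statement: it is imported as a black box from Wilson's 1983 paper, so there is no internal proof to compare against. Judged on its own terms, your sketch correctly identifies the broad shape of the argument (reduce to the profinite case, invoke the finite simple groups, assemble a characteristic series), but it leaves the two genuinely hard steps unproved, and these are precisely the content of the theorem.

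The first gap is that you never establish that $U$ has \emph{finite exponent}, and everything downstream depends on this. An abstract torsion group can involve infinitely many primes and unbounded element orders; the reason a compact torsion group cannot is a Baire category argument ($U=\bigcup_{n}T_{n}$ with $T_{n}=\{x\in U\;|\;x^{n}=1\}$ closed, so some $T_{n}$ contains a coset of an open subgroup, from which one extracts a bound on the exponent), and this is the cornerstone of Wilson's proof. Your appeal to the abelian analogy is suggestive but is not an argument in the nonabelian profinite setting. The second gap is that the inverse-limit assembly is not well defined as stated: the finite quotients $U/V$ carry no canonical characteristic series of the required shape, the series you choose for different $V$ need not be compatible under the maps $U/V'\rightarrow U/V$ (neither term-by-term nor in length), and the formula $U_{i}=\varprojlim\,(U_{i}V)/V$ is circular since $U_{i}$ appears on both sides. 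The standard repair is to define the terms intrinsically in $U$ by canonical characteristic constructions (the closed subgroup generated by all normal pro-$p$ subgroups for each of the finitely many relevant primes, the semisimple socle, and so on); this requires the finite-exponent bound together with the classification-based fact that only finitely many isomorphism types of finite simple groups have exponent dividing a fixed $n$, and it is that fact, plus the Schreier property, which bounds the length of the series --- the very obstacle you flag but do not overcome. A smaller point: a nontrivial compact connected group need not contain a torus (solenoids do not), though it does surject onto a nontrivial connected Lie group and hence has elements of infinite order, so your conclusion $U^{\circ}=\{1\}$ is correct even though the justification should pass through a Lie quotient.
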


\section{Non-meagre conjugacy classes}
We first consider t.d.l.c.s.c. groups with a non-meagre conjugacy class. The key step is to initially consider profinite groups with a \emph{non-null} conjugacy class.

\begin{lem} \thlabel{keylem}
Let $U$ be a profinite group with normalized Haar measure $\mu$. If $h\in U$ is such that $\mu(h^U)>0$, then 
\begin{enumerate}
\item For all $N\trianglelefteq_oU$, $|C_{U/N}(hN)|\sleq \frac{1}{\mu(h^U)}$.
\item $|C_{U}(h)|\sleq \frac{1}{\mu(h^U)}$, and in particular, $h$ is torsion.
\end{enumerate} 
\end{lem}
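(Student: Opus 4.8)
The plan is to push everything down to the finite quotients $U/N$, where the size of a conjugacy class and the size of a centralizer are tied together by the orbit--stabilizer theorem, and then to recover the bound on $C_U(h)$ by an inverse-limit argument. For part (1), fix $N\trianglelefteq_o U$ and let $\pi_N\colon U\to U/N$ be the quotient map. Since $N$ is open of finite index and $\mu$ is the normalized invariant measure, every coset of $N$ has measure $1/|U/N|$, so the pushforward $(\pi_N)_*\mu$ is normalized counting measure on the finite group $U/N$; explicitly, $\mu(\pi_N^{-1}(S))=|S|/|U/N|$ for every $S\subseteq U/N$. Because $\pi_N$ carries conjugates of $h$ to conjugates of $hN$, we have $h^U\subseteq \pi_N^{-1}\big((hN)^{U/N}\big)$, and the orbit--stabilizer theorem in $U/N$ gives $|(hN)^{U/N}|=|U/N|/|C_{U/N}(hN)|$. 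Hence
\[
\mu(h^U)\;\sleq\;\mu\!\left(\pi_N^{-1}\big((hN)^{U/N}\big)\right)\;=\;\frac{|(hN)^{U/N}|}{|U/N|}\;=\;\frac{1}{|C_{U/N}(hN)|},
\]
and rearranging yields $|C_{U/N}(hN)|\sleq 1/\mu(h^U)$, which is (1).

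For part (2), set $M:=1/\mu(h^U)$ and note that $C_U(h)$, being the centralizer of a point, is a closed and hence profinite subgroup of $U$. Fixing a normal basis $(N_i)$ at $1$, the natural map $C_U(h)\to \varprojlim_i C_U(h)N_i/N_i$ is an isomorphism, since $C_U(h)$ is closed. Each finite image $C_U(h)N_i/N_i$ lies inside $C_{U/N_i}(hN_i)$ — indeed any $c\in C_U(h)$ satisfies $cN_i\in C_{U/N_i}(hN_i)$ — so by (1) each of these finite groups has order at most $M$.

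The one step that requires genuine care, and which I expect to be the main obstacle, is concluding that an inverse limit of finite sets of uniformly bounded size cannot exceed that bound. I would argue as follows: if $x_1,\dots,x_k$ are distinct elements of $\varprojlim_i C_U(h)N_i/N_i$, then each pair differs at some level, and compatibility of the transition maps forces the pair to remain distinct at every larger level; by directedness of the system there is then a single index $i$ at which $x_1,\dots,x_k$ have $k$ distinct images, so $k\sleq |C_U(h)N_i/N_i|\sleq M$. Thus every finite subset of the limit has at most $M$ elements, whence $|C_U(h)|\sleq M=1/\mu(h^U)$. In particular $C_U(h)$ is finite, and since $\langle h\rangle\sleq C_U(h)$, the element $h$ has finite order and so is torsion. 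Everything outside this inverse-limit bookkeeping is a direct application of orbit--stabilizer together with the behaviour of Haar measure under finite quotients.
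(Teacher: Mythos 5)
Your proof is correct and follows essentially the same route as the paper: part (1) is the same orbit--stabilizer computation (your $\pi_N^{-1}\bigl((hN)^{U/N}\bigr)$ is exactly the paper's union of cosets $hN\cup k_1hk_1^{-1}N\cup\dots\cup k_nhk_n^{-1}N$), and part (2), which the paper dispatches with ``it follows,'' is fleshed out by you via the standard observation that $C_U(h)$ injects into the bounded finite centralizers $C_{U/N_i}(hN_i)$. No gaps.
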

\begin{proof}
Let $h\in U$ have a non-null conjugacy class and take $N\trianglelefteq_o U$. Take a minimal set of coset representatives $h,k_1hk_1^{-1},\dots,k_nhk_n^{-1}$ for $(hN)^{U/N}$ in $U/N$. Certainly, 
\[
h^U\subseteq hN\cup\dots\cup k_nhk_n^{-1}N,
\]
so 
\[
0<\mu(h^U)\sleq |(hN)^{U/N}|\mu(N).
\]
Since $1=\mu(U)=|U/N|\mu(N)$, we have
\[
\begin{array}{ccl}
 |(hN)^{U/N}|\mu(N) & = & |U/N:C_{U/N}(hN)|\mu(N)\\
 		&  = & \frac{|U/N|\mu(N)}{|C_{U/N}(hN)|} \\
 		& = & \frac{1}{|C_{U/N}(hN)|}.
\end{array}
\] 
Hence, $|C_{U/N}(hN)|\sleq\frac{1}{\mu(h^U)}$, and it follows $|C_U(h)|\sleq \frac{1}{\mu(h^U)}$.
\end{proof}

Via \thref{keylem}, category and measure theoretic notions of size for conjugacy classes in second countable profinite groups agree.

\begin{thm}\thlabel{open conj} Suppose $U$ is a second countable profinite group and $g\in U$. The following are equivalent:
\begin{enumerate}
\item $g^U$ is non-meagre.
\item $g^U$ is open.
\item $\mu(g^U)>0$ where $\mu$ is the normalized Haar measure on $U$.
\item There is $M>0$ such that $|C_{U/N}(gN)|\sleq M$ for all $N\trianglelefteq_oU$.
\end{enumerate}
\end{thm}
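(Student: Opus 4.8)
The plan is to show the four conditions equivalent by proving the implications $(2)\Rightarrow(1)$, $(2)\Rightarrow(3)$, $(3)\Rightarrow(4)$, $(1)\Rightarrow(2)$, and $(4)\Rightarrow(2)$; together these give $(1)\Leftrightarrow(2)$ and $(2)\Leftrightarrow(3)\Leftrightarrow(4)$. Two preliminary observations drive everything. First, since $U$ is compact and $u\mapsto ugu^{-1}$ is continuous, $g^U$ is a compact, hence closed, subset of $U$. Second, $U$ acts transitively on $g^U$ by conjugation, so $g^U$ is homogeneous under the conjugation action. The two implications out of $(2)$ are then immediate: a non-empty open subset of the Baire space $U$ is non-meagre, giving $(2)\Rightarrow(1)$, and a non-empty open set has positive Haar measure, giving $(2)\Rightarrow(3)$. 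The implication $(3)\Rightarrow(4)$ is exactly \thref{keylem}(1), taking $M=1/\mu(g^U)$.

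For $(1)\Rightarrow(2)$ I would exploit homogeneity. Since $g^U$ is closed and non-meagre, it is not nowhere dense, so it has non-empty interior; choosing an interior point $vgv^{-1}$ and conjugating by $v^{-1}$ shows that $g$ itself lies in $\mathrm{int}(g^U)$. Then for any $ugu^{-1}\in g^U$, the open set $u\,\mathrm{int}(g^U)\,u^{-1}$ is contained in $g^U$ by conjugation-invariance and contains $ugu^{-1}$, so every point of $g^U$ is interior and $g^U$ is open.

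The substantive step, and the one I expect to be the main obstacle, is $(4)\Rightarrow(2)$. Fix a normal basis $(N_i)_{i\in\N}$ at $1$, let $\pi_i\colon U\to U/N_i$ be the quotient, and set $A_i:=\pi_i^{-1}\big((gN_i)^{U/N_i}\big)$. Each $A_i$ is clopen, the sequence is $\subseteq$-decreasing, and $\bigcap_i A_i=g^U$: the containment $\supseteq$ is clear, while for $\subseteq$ one shows that any $x\in\bigcap_i A_i$ is a limit of conjugates of $g$ and then invokes the closedness of $g^U$. A coset count gives $\mu(A_i)=1/|C_{U/N_i}(gN_i)|$. Now hypothesis $(4)$ bounds the integers $|C_{U/N_i}(gN_i)|$ by $M$, while the nesting $A_{i+1}\subseteq A_i$ makes them non-decreasing; a bounded non-decreasing sequence of integers is eventually constant, so the measures $\mu(A_i)$ stabilize at some index $i_0$.

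Finally, since the $A_i$ are nested clopen sets with $\mu(A_i)=\mu(A_{i_0})$ for $i\geq i_0$, each difference $A_{i_0}\setminus A_i$ is a clopen null set, hence empty; thus $A_i=A_{i_0}$ for all $i\geq i_0$, and $g^U=\bigcap_i A_i=A_{i_0}$ is clopen, in particular open. The two delicate points to get right here are the identification $\bigcap_i A_i=g^U$, which is precisely where closedness of $g^U$ is used, and the passage from equal measures to equal sets, which uses that a non-empty clopen set has positive Haar measure. I would expect the bookkeeping in these two points, rather than any deep idea, to be where care is required.
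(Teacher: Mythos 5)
Your proposal is correct and follows essentially the same route as the paper: $(1)\Rightarrow(2)$ via closedness of $g^U$ plus conjugation-homogeneity, $(3)\Rightarrow(4)$ via \thref{keylem}, and the key step from $(4)$ to openness via the nested clopen sets $g^UN_i$ of measure $1/|C_{U/N_i}(gN_i)|$, concluding that the open null difference $A_{i_0}\setminus g^U$ is empty. Your observation that the centralizer orders are non-decreasing (from the nesting) and hence eventually constant is a minor, clean refinement of the paper's ``pass to a subsequence'' step; otherwise the arguments coincide.
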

\begin{proof}
$(1)\Rightarrow (2)$ Since $g^U$ is non-meagre, it is somewhere dense. However, $g^U$ is closed since the continuous image of a compact set, and therefore, $g^U$ contains an non-empty open set. Say $O\subseteq g^U$. So
\[
g^U=\bigcup_{u\in U}uOu^{-1},
\]
and $g^G$ is open.\par

\indent $(2)\Rightarrow (3)$ is immediate from properties of the Haar measure. \par

\indent For $(3)\Rightarrow (4)$, take $N\trianglelefteq_o U$. By \thref{keylem},
\[
|C_{U/N}(gN)|\sleq \frac{1}{\mu(g^U)}.
\] 
Fixing $M\sgeq \frac{1}{\mu(g^U)}$, we have $(4)$. \par

\indent $(4)\Rightarrow (1)$ Fix $(N_i)_{i\in \N}$ a normal basis at $1$ for $U$ and take $M>0$ which witnesses $(4)$ for $g$. For each $i$, let $A_i\subseteq g^U$ be a minimal set of coset representatives for $(gN_i)^{U/N_i}$ in $U/N_i$.\par

\indent For each $i\in \N$,
\[
g^U\subseteq A_iN_i:=\bigcup_{a\in A_i}aN_i,
\]
and it is easy to check
\[
\bigcap_{i\in \N}A_iN_i=g^U.
\]
Since $(A_iN_i)_{i\in \N}$ is an $\subseteq$-decreasing sequence,
\[
\mu(A_iN_i)\rightarrow \mu(g^U)
\]
by continuity from above for $\mu$. \par

\indent On the other hand, $|C_{U/N_i}(gN_i)|\sleq M$ for any $i\in \N$, and therefore, $|C_{U/N_i}(gN_i)|$ only takes on finitely many values as $i$ varies. By passing to a subsequence, we may assume $|C_{U/N_i}(gN_i)|=k\sleq M$ for all $i$. So
\[
\mu(A_iN_i)=|(gN_i)^{U/N}|\mu(N_i)=\frac{1}{|C_{U/N_i}(gN_i)|}=\frac{1}{k}
\]
for each $i$, and it follows $\mu(A_iN_i)= \mu(g^U)$ for all $i$.\par

\indent We now fix an $i$ and consider
\[
E:=(A_iN_i)\setminus g^U.
\] 
Certainly $E$ is open and $\mu(E)=0$. As the only Haar null open set is $\emptyset$, $E=\emptyset$, and $A_iN_i=g^U$. We conclude $g^U$ is open and, a fortiori, non-meagre.
\end{proof}
\noindent We remark that $(4)$ of the above theorem is an algebraic characterization of a profinite group having an open conjugacy class.\par

\indent We now apply our results for second countable profinite groups to obtain a result for all t.d.l.c.s.c. groups.

\begin{lem}\thlabel{non-null} Let $G$ be a t.d.l.c.s.c. group. If $g^G$ is non-null, then $g^U$ is non-null for any compact open subgroup $U$ of $G$. If $g$ is also periodic, then $g$ is torsion and $g^U$ is open for any compact open subgroup $U$ containing $g$.
\end{lem}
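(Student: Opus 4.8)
The plan is to transfer the non-null hypothesis on $g^G$ down to $g^U$ inside a single compact open subgroup by decomposing $g^G$ along the cosets of $U$, and then, for the periodic case, to invoke the profinite results \thref{keylem} and \thref{open conj} directly.

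First I would fix a compact open subgroup $U\sleq G$, which exists by \thref{vanDantzig}. Since $U$ is open and $G$ is second countable, $G$ splits into only countably many left cosets of $U$; choose representatives $(t_n)_{n\in\N}$. Writing an arbitrary $x\in G$ as $x=t_nu$ with $u\in U$ gives $xgx^{-1}=t_n(ugu^{-1})t_n^{-1}$, so that
\[
g^G=\bigcup_{n\in\N}t_ng^Ut_n^{-1}.
\]
Each $g^U$ is compact, being the continuous image of $U$, so this is a countable union of Borel sets. If $g^G$ is non-null, then some $t_ng^Ut_n^{-1}$ is non-null. Conjugation by $t_n$ is a homeomorphism of $G$ that scales the Haar measure by a positive factor (the value of the modular function at $t_n$), hence preserves the null sets; therefore $g^U$ is non-null. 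This gives the first assertion, and note that it makes no use of whether $g$ lies in $U$.

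For the second assertion, suppose in addition that $g$ is periodic and that $U$ is a compact open subgroup with $g\in U$. Then $U$ is a second countable profinite group and $g^U\subseteq U$. Restricting the Haar measure of $G$ to $U$ and normalizing, non-nullity of $g^U$ in $G$ is equivalent to $\mu(g^U)>0$ for $\mu$ the normalized Haar measure on $U$. Thus condition $(3)$ of \thref{open conj} holds, and condition $(2)$ of that theorem gives that $g^U$ is open. Finally, applying \thref{keylem}$(2)$ inside $U$ yields $|C_U(g)|\sleq 1/\mu(g^U)<\infty$, and in particular $g$ is torsion.

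The coset decomposition and the measure normalization are routine; the one point requiring care is the behaviour of the Haar measure of $G$ under conjugation. Since $G$ need not be unimodular, conjugation by $t_n$ need not preserve it, but it does rescale it by a positive constant, and this is exactly what is needed to pull non-nullity back from $t_ng^Ut_n^{-1}$ to $g^U$. I expect this---rather than any structural input about $G$---to be the only delicate step.
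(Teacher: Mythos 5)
Your proof is correct and takes essentially the same route as the paper: decompose $g^G$ as a countable union of conjugates of $g^U$ over coset representatives of the compact open subgroup, use positivity of the modular function to pull non-nullity back to $g^U$, and then in the periodic case pass to the normalized Haar measure on $U$ and invoke \thref{keylem} and \thref{open conj}. No gaps.
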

\begin{proof} Fix $U\sleq_oG$ compact. Since $G$ is second countable, there is a countable set $(h_i)_{i\in \N}$ such that $G=\bigcup_{i\in\N}h_iU$. So $g^G=\bigcup_{i \in \N} g^{h_iU}$, and for some $i\in \N$, $\mu(g^{h_iU})>0$. Fix such an $i$; now
\[
0<\mu(g^{h_iU})=\mu(h_ig^Uh_i^{-1})=\mu(g^U)\Delta(h^{-1}_i)
\]
where $\Delta$ is the modular function. Since the modular function is strictly positive, $\mu(g^U)>0$ as desired. \par

\indent If $g$ is also periodic, take $W$ a compact open subgroup containing $g$. By the uniqueness of the Haar measure, $g^W$ is non-null in $W$ with respect to the normalized Haar measure on $W$. \thref{keylem} then implies $g$ is torsion, and \thref{open conj} implies $g^W$ is open. 
\end{proof}

\begin{cor}\thlabel{mcat}Suppose $G$ is t.d.l.c.s.c. group and $g$ is periodic. The following are equivalent:
\begin{enumerate}
\item $g^G$ is open.
\item $g^G$ is non-meagre.
\item $g^G$ is non-null.
\end{enumerate}
\end{cor}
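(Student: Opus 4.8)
The plan is to run the cycle $(1)\Rightarrow(2)\Rightarrow(3)\Rightarrow(1)$, using a single reduction throughout: since $g$ is periodic it lies in some compact open subgroup $W$ (as already used in \thref{non-null}), and $W$ is a second countable profinite group, so \thref{open conj} applies inside $W$. The only real subtlety is translating ``large'' and ``small'' between $G$ and the open subgroup $W$, and this is handled by two elementary facts: conjugation by a fixed element of $G$ is a homeomorphism of $G$, so it preserves category; and a subset of the open set $W$ is meagre (resp. null) in $W$ if and only if it is meagre (resp. null) in $G$, since $W$ is open. Each step of the cycle then amounts to pushing the relevant hypothesis down into $W$ and invoking the profinite dichotomy.

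The implication $(1)\Rightarrow(2)$ is immediate, as $g^G$ is a non-empty open subset of the Baire space $G$ and hence non-meagre. For $(2)\Rightarrow(3)$ I would fix a compact open $W$ with $g\in W$ and write $G=\bigcup_{i\in\N}h_iW$ for some countable $(h_i)_{i\in\N}$, so that $g^G=\bigcup_{i\in\N}g^{h_iW}$. If $g^G$ is non-meagre, then some $g^{h_iW}=h_i g^W h_i^{-1}$ is non-meagre; as conjugation by $h_i$ is a homeomorphism, $g^W$ is non-meagre in $G$, and as $W$ is open, $g^W$ is non-meagre in $W$. Now \thref{open conj} applied to $W$ forces $g^W$ to be open in $W$, so $\mu_W(g^W)>0$; since $W$ is open in $G$, this makes $g^W$, and a fortiori $g^G$, non-null in $G$.

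Finally, $(3)\Rightarrow(1)$ is essentially \thref{non-null}: if $g^G$ is non-null then, $g$ being periodic, $g^W$ is open in $W$ (hence in $G$) for any compact open $W$ containing $g$. Writing $g^G=\bigcup_{x\in G}x g^W x^{-1}$ then exhibits $g^G$ as a union of homeomorphic, and therefore open, images of $g^W$, so $g^G$ is open.

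I expect the only step with genuine content to be $(2)\Rightarrow(3)$, where the whole point is to transport non-meagreness into the compact open subgroup $W$ so that the profinite equivalence of \thref{open conj} can be used; the remaining implications are either trivial or already recorded in \thref{non-null}. Accordingly, the main thing to get right is the bookkeeping that ``non-meagre'' and ``non-null'' transfer faithfully between $W$ and $G$ through the openness of $W$, rather than any new argument.
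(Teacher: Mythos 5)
Your proposal is correct, and the overall cycle $(1)\Rightarrow(2)\Rightarrow(3)\Rightarrow(1)$ matches the paper's; the implications $(1)\Rightarrow(2)$ (Baire category) and $(3)\Rightarrow(1)$ (via \thref{non-null}, with the union $g^G=\bigcup_{x\in G}xg^Wx^{-1}$ of open sets --- a detail the paper leaves implicit) are essentially identical. The one place you genuinely diverge is $(2)\Rightarrow(3)$. The paper does not use periodicity or any profinite input there: it writes $G=\bigcup_{i}K_i$ as a countable union of compact sets, notes that some $g^{K_i}$ is non-meagre and closed (being a continuous image of a compact set), hence has non-empty interior, and concludes $\mu(g^G)>0$ directly. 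You instead decompose $G$ into cosets $h_iW$ of a compact open subgroup $W$ containing $g$, transport non-meagreness of some $g^{h_iW}=h_ig^Wh_i^{-1}$ down to $g^W$ inside $W$, and invoke the profinite equivalence \thref{open conj} to get $g^W$ open. Both arguments are sound; your bookkeeping about meagreness and nullity passing between $G$ and the open subgroup $W$ is the standard fact and is used correctly. The trade-off is that the paper's version of $(2)\Rightarrow(3)$ is more elementary and works for arbitrary $g$ (periodicity enters only in $(3)\Rightarrow(1)$), whereas yours leans on the heavier \thref{open conj} but as a by-product already yields that $g^W$, and hence $g^G$, is open, so you could have closed the cycle at $(2)\Rightarrow(1)$ directly.
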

\begin{proof}
$(1)\Rightarrow (2)$ This follows by the Baire category theorem. \par

$(2)\Rightarrow (3)$ Suppose $g^G$ is non-meagre. Since $G$ is $K_{\sigma}$, we may write $G=\bigcup_{i\in \N}K_i$ with the $K_i$ compact, so $g^G=\bigcup_{i\in \N}g^{K_i}$. The Baire category theorem implies $g^{K_i}$ is non-meagre for some $i$; fix such an $i$. We have that $g^{K_i}$ is also closed and, thereby, has non-empty interior. It follows $\mu(g^G)>0$ proving $(3)$. \par

$(3)\Rightarrow (1)$ This is immediate from \thref{non-null}.
\end{proof}

It is not clear there are non-discrete examples of the groups discussed in this section. For completeness, we present an example due to C. Rosendal of a non-discrete second countable profinite group with a non-null conjugacy class. The author wishes to express his thanks to Rosendal for allowing this example to be included in the present work.

\subsection{Rosendal's example} Let $D_{6}$ be the dihedral group of the triangle. Recall
\[
D_6=\langle s,r\;|\; r^3=1,\hspace{2pt} s^2=1,\text{ and }sr=r^{-1}s\rangle.
\]
Every element of $D_6$ is of the form $r^i$ or $sr^i$ for $i=0,1,2$. Form $D_6^{\mathbb{N}}$ and define $G\sleq D_6^{\mathbb{N}}$ to be the collection of $\alpha$ such that all coordinates of $\alpha$ are of the form $sr^i$ or all coordinates are of the form $r^i$. Certainly, $G$ is a closed subset of $D_6^{\mathbb{N}}$ and is closed under inverses. It is easy to check $G$ is also closed under multiplication. \par

\indent Let $H\trianglelefteq G$ be the collection of $\alpha\in G$ for which all coordinates are of the form $r^i$. So $H$ is closed and index two in $G$, whereby $H$ is also open. Now consider the element $\beta\in G$ which is constantly equal to $s$. 

\begin{claim} 
Every $\gamma \in G\setminus H$ is conjugate to $\beta$ by an element of $H$. 
\end{claim}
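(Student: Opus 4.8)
The plan is to reduce the claim to a coordinatewise computation in $D_6$, exploiting that $G\leq D_6^{\N}$ carries the coordinatewise group operations. Since $\gamma\in G\setminus H$, every coordinate of $\gamma$ is a reflection, so I may write $\gamma=(sr^{i_n})_{n\in\N}$ for suitable exponents $i_n\in\{0,1,2\}$. I then seek a conjugator of the form $\eta=(r^{j_n})_{n\in\N}$, which automatically lies in $H$ as all its coordinates are rotations, and I verify $\eta\gamma\eta^{-1}=\beta$ by checking equality in each coordinate separately. Because the product group operation is coordinatewise, this reduces the whole claim to a single conjugacy computation inside $D_6$, performed uniformly in each slot.

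The key step is the conjugation identity in $D_6$. Using the relation $sr=r^{-1}s$ one gets $sr^{k}=r^{-k}s$ by induction, hence $r^{j}s=sr^{-j}$, and therefore $r^{j}(sr^{i})r^{-j}=sr^{\,i-2j}$. Thus conjugating the reflection $sr^{i}$ by the rotation $r^{j}$ produces $sr^{\,i-2j}$, and this equals the constant coordinate $s=sr^{0}$ of $\beta$ precisely when $i-2j\equiv 0\pmod 3$.

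To finish, I solve the congruence in each coordinate. Since $2$ is invertible modulo $3$, with $2^{-1}\equiv 2\pmod 3$, the equation $2j\equiv i\pmod 3$ has the solution $j_n\equiv 2i_n\pmod 3$ for every $i_n$. Choosing $j_n\in\{0,1,2\}$ accordingly and setting $\eta=(r^{j_n})_{n\in\N}$ yields an element of $H$ for which $\eta\gamma\eta^{-1}=\beta$ holds coordinatewise, and hence in $G$.

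I expect no genuine obstacle here. Because $G$ sits inside the full Cartesian product rather than a restricted direct sum, there is no convergence or support condition to impose on $\eta$, and every coordinate problem is dispatched by the same formula. The one point worth flagging is that the argument rests essentially on all reflections of $D_6$ forming a single conjugacy class under rotations, equivalently on the solvability of $2j\equiv i\pmod 3$ for all $i$; this is exactly the feature of the \emph{odd} dihedral group that makes Rosendal's example work and would fail for a dihedral group with an even cyclic part.
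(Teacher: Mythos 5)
Your proof is correct and is essentially the paper's argument: the paper defines the same conjugator $\eta$ coordinatewise (its explicit table $j(i)=0\mapsto 1$, $1\mapsto r^2$, $2\mapsto r$ is exactly your formula $j\equiv 2i\pmod 3$) and verifies $\eta\gamma\eta^{-1}=\beta$ slot by slot. Your remark about why the odd dihedral group is essential is a nice addition but not a departure in method.
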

\begin{proof} Let $\gamma \in G\setminus H$ and say $\gamma (i)=sr^{j(i)}$ where $j(i)\in \{0,1,2\}$. Define $\eta \in H$ as follows:
$$
\eta(i) =
\begin{cases}
1 & \text{if }j(i)=0 \\
r^2 & \text{if }j(i)=1\\
r & \text{if }j(i)=2
\end{cases}.
$$
One checks $\eta \gamma \eta^{-1}=\beta$. 
\end{proof}
\noindent By the claim, $G\setminus H$ is the conjugacy class of $\beta$, so $\beta^G$ is open and non-null.

\begin{rmk} We conclude this section with two remarks.
\begin{enumerate}
\item \thref{open conj} gives a measure and category equivalence which may be independently useful. For example, \thref{open conj} seems potentially useful to answer the following open question asked by L. L\'{e}vai and L. Pyber in \cite{LP00}: Let $U$ be profinite and put $T_n:=\{x\in U\;|\; x^n=1\}$ for $n\sgeq 2$. If $\mu(T_n)>0$, then is $T_n$ non-meagre?  For $n=2$, the question is known to have a positive answer \cite{LP00}. 

\item Rosendal's example is solvable. It is unknown if all such examples must be virtually solvable. The following partial results are known to the author: $(i)$ Second countable pronilpotent groups with an open conjugacy class are solvable. $(ii)$ Second countable profinite groups $U$ such that $g\in U$ has an open conjugacy class and $|g|$ is prime are virtually solvable. 
\end{enumerate}
\end{rmk}

\section{Dense conjugacy classes}
In this section, we consider a much different topological condition on a conjugacy class: density.

\begin{lem}\thlabel{pro-p}
A torsion pro-$p$ group with an open conjugacy class is finite 
\end{lem}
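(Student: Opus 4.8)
The plan is to combine the algebraic characterization of open conjugacy classes with Zel'manov's local finiteness theorem \thref{zel}. The key observation is that an open conjugacy class forces every finite continuous quotient of $U$ to be generated by a \emph{bounded} number of elements. Since a torsion pro-$p$ group is locally finite, and since a topologically finitely generated locally finite profinite group must be finite, this will finish the argument.

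Concretely, I would first note that an open conjugacy class is non-null, so \thref{keylem} supplies an $M>0$ with $|C_{U/N}(gN)|\sleq M$ for every $N\trianglelefteq_oU$ (one may take any $M\sgeq 1/\mu(g^U)$). Fix such an $N$, write $P:=U/N$ and $\bar g:=gN$, so that $P$ is a finite $p$-group. The crucial elementary step is the coset containment $\bar g^{\,P}\subseteq \bar gP'$, which holds because $x\bar gx^{-1}=\bar g[\bar g^{-1},x]$ with $[\bar g^{-1},x]\in P'$; hence $|\bar g^{\,P}|\sleq |P'|$. On the other hand $|\bar g^{\,P}|=[P:C_P(\bar g)]\sgeq |P|/M$. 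Comparing the two estimates gives $[P:P']\sleq M$. Since $P'\sleq \Phi(P)$, we get $|P/\Phi(P)|\sleq M$, and the Burnside basis theorem then shows $P$ is generated by at most $\log_p M$ elements. As $N$ was arbitrary, every finite quotient of $U$ is $d$-generated for $d:=\lfloor\log_p M\rfloor$.

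I would then lift this to $U$ itself. A profinite group all of whose finite quotients are $d$-generated is topologically $d$-generated: the sets of $d$-tuples in $U^d$ whose images generate $U/N$ are nonempty (by the previous paragraph) and clopen, they are downward directed in $N$, so by compactness their intersection is nonempty, yielding $x_1,\dots,x_d$ with $U=\ol{\langle x_1,\dots,x_d\rangle}$. Now torsion-ness enters: by \thref{zel} the group $U$ is locally finite, so the abstract subgroup $\langle x_1,\dots,x_d\rangle$ is finite, hence closed, hence equal to its closure $U$. Therefore $U$ is finite.

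The main obstacle, and the step doing the genuine work, is the passage from ``bounded centralizer of the single element $g$'' to ``bounded number of generators of every finite quotient.'' A naive attempt to bound $|U/N|$ directly is doomed, since small centralizers do not bound group order in general; the resolution is the containment $\bar g^{\,P}\subseteq \bar gP'$, which converts the centralizer bound into a bound on the abelianization $[P:P']$, and it is precisely the abelianization that governs the number of generators via Burnside. Once that generator bound is secured, torsion is used only through Zel'manov's theorem and the remainder of the argument is soft.
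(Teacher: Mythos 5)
Your proof is correct, and its engine is the same as the paper's: the containment of the conjugacy class in a single coset of the derived subgroup, which converts openness of the class into openness of the Frattini subgroup and hence into topological finite generation, followed by Zel'manov's theorem \thref{zel} to see that a topologically finitely generated, locally finite profinite group is finite. The difference is purely in execution, and here the paper is substantially shorter. You descend to each finite quotient $P=U/N$, import the quantitative centralizer bound from \thref{keylem}, deduce $[P:P']\sleq M$, invoke the Burnside basis theorem quotient by quotient, and then run a compactness argument to lift the uniform generator bound back to $U$. The paper instead works in $U$ directly: $h^{-1}h^U$ is an open subset of $[U,U]$ (already closed in the paper's notation), so $[U,U]$ is open, hence $\Phi(U)=U^p[U,U]$ is open by \thref{frat}, and finitely many coset representatives of $\Phi(U)$ topologically generate $U$ by the non-generator property --- no passage to quotients, no measure-theoretic input, and no compactness lifting is needed. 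Your route does yield a small bonus, namely an explicit bound $\log_pM$ on the minimal number of topological generators in terms of $\mu(g^U)$, but none of that is required for the statement; the detour through \thref{keylem} and the inverse-limit argument can be excised entirely, leaving exactly the paper's two-line proof.
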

\begin{proof} Suppose $U$ is a torsion pro-$p$ group with an open conjugacy class $h^U$. Since $h^{-1}h^U\subseteq [U,U]$, we have that $[U,U]$ is open, and $\Phi(U)$ is open by \thref{frat}. Let $u_1,\dots,u_k$ be coset representatives for $\Phi(U)$ in $U$. Plainly,
\[
U=cl(\langle u_1,\dots,u_n\rangle)\Phi(U),
\]
so $U=cl(\langle u_1,\dots,u_n\rangle)$ since $\Phi(U)$ is the collection of non-generators. Zel'manov's theorem, \thref{zel}, now implies $U$ is finite.
\end{proof}

\begin{thm}\thlabel{dense conj null}
If $G$ is a non-trivial t.d.l.c.s.c. group and $g^G$ is dense, then $g^G$ is meagre and null. 
\end{thm}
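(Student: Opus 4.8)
The plan is to argue by contradiction, showing that a non-trivial t.d.l.c.s.c.\ group $G$ with $g^G$ dense cannot have $g^G$ non-null. Since non-meagre implies non-null by the $K_{\sigma}$/Baire argument of \thref{mcat} (the implication $(2)\Rightarrow(3)$, which does not use periodicity), establishing this simultaneously yields that $g^G$ is both meagre and null. So I would assume $g^G$ is dense and $\mu(g^G)>0$ and derive that $G$ is trivial.

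First I would locate $g$ inside a compact open subgroup. By \thref{vanDantzig} fix a compact open $U\sleq_o G$; density of $g^G$ together with openness of $U$ gives a conjugate of $g$ lying in $U$, and after replacing $g$ by this conjugate (which changes neither the class nor its density or non-nullity) I may assume $g\in U$. Then $g$ is periodic, so \thref{non-null} shows $g$ is torsion, say of order $n$, and that $g^U$ is open in $U$. Since every element of $g^G$ has order $n$ and $g^G$ is dense while $\{x\mid x^n=1\}$ is closed, it follows that $G=\{x\mid x^n=1\}$; thus $G$ has finite exponent and, in particular, $U$ is a compact (profinite) torsion group.

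The heart of the argument is to prove that $U$ is finite. I would apply \thref{wil} to $U$ to obtain a finite series of closed subgroups, normal in $U$, whose factors are each pro-$p$ or a Cartesian product of isomorphic finite simple groups, and then induct on the length of such a series, peeling off the top factor. For the inductive step on a compact open $V\sleq_o G$ carrying such a series $\{1\}=V_0\trianglelefteq\dots\trianglelefteq V_k=V$: density furnishes a conjugate $g_V\in g^G\cap V$, and \thref{non-null} together with \thref{open conj} makes $g_V^{\,V}$ open in $V$; pushing forward along the open quotient map $V\to V/V_{k-1}$ yields an open conjugacy class in the top factor $Q:=V/V_{k-1}$. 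If $Q$ is pro-$p$ then it is torsion with an open conjugacy class, so \thref{pro-p} (which absorbs \thref{zel}) forces $Q$ finite. If $Q=\prod_i S$ is a product of finite simple groups, then condition $(4)$ of \thref{open conj} applied to $Q$ — using the open normal sub-products, on which centralizers factor coordinatewise — bounds $\prod_{i\in F}|C_S(\bar g_i)|$ uniformly over finite $F$, forcing $\bar g$ to have finite support; but coordinatewise conjugation preserves support, so the open class $\bar g^{\,Q}$ would be confined to a proper closed sub-product of empty interior unless the index set is finite, whence $Q$ is finite. In either case $V_{k-1}$ has finite index, so it is again compact open in $G$ and inherits a length $k-1$ series of subgroups normal in it (normality in $V$ passes to $V_{k-1}$), and the induction closes.

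Finite $U$ makes $\{1\}$ open, so $G$ is discrete; then $g^G$ dense forces $g^G=G\ni 1$, whence $g=1$ and $G=\{1\}$, contradicting non-triviality. I expect the main obstacle to be precisely this structural finiteness step: organizing the induction so that the \emph{normality} of the series survives passage to the penultimate term, and making the simple-product case work by exploiting that the relevant sub-products are genuinely normal in the factor $Q$ (so that condition $(4)$ of \thref{open conj} may be invoked) rather than merely normal in $U$. Density enters decisively here, as it is exactly what excludes the Rosendal-type open-but-confined classes and forces every Wilson factor to be finite.
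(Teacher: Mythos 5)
Your proposal is correct and follows essentially the same route as the paper: reduce to a torsion compact open subgroup via \thref{vanDantzig} and \thref{non-null}, apply Wilson's theorem (\thref{wil}), kill pro-$p$ factors with \thref{pro-p}, and kill infinite simple-product factors with the centralizer bound of \thref{keylem}/\thref{open conj}. The only difference is bookkeeping: you induct downward through the Wilson series to conclude $U$ is finite and hence $G$ is discrete, whereas the paper fixes the greatest index $k$ with $U_k$ not open and derives the contradiction in a single step.
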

\begin{proof} 
Suppose toward a contradiction $G$ is a non-trivial t.d.l.c.s.c. group and $g^G$ is dense and either non-meagre or non-null. Since $g^G$ is dense, $g\in P_1(G)$ by \thref{vanDantzig}. \thref{mcat} now implies $g^G$ is open, and by \thref{non-null}, $g$ is torsion. Say $|g|=n$ and observe $G$ must have exponent $n$. \par

\indent Consider $U$ a compact open subgroup of $G$. Since $U$ is torsion, there is a series of closed characteristic subgroups
\[
\{1\}=U_0\sleq U_1\sleq\dots\sleq U_n=U
\] 
given by Wilson's theorem, \thref{wil}. Let $k<n$ be greatest such that $U_k$ is not open in $U$. Since $U_{k+1}$ is open, $g^G$ meets $U_{k+1}$; without loss of generality, $g\in U_{k+1}$. So \thref{non-null} implies $g^{U_{k+1}}$ is open, and therefore, $U_{k+1}/U_k$ also has an open conjugacy class.\par

\indent By \thref{pro-p}, $U_{k+1}/U_k$ cannot be pro-$p$ since else $U_{k}$ is finite index and, therefore, open. We conclude $U_{k+1}/U_{k}$ must be isomorphic to a Cartesian product of isomorphic finite simple groups. Say $U_{k+1}/U_k\simeq \prod_{i\in I}S_i=:S$ and let $(s_i)_{i\in I}=s\in S$ have an open conjugacy class. \thref{keylem} implies $s$ has a finite centralizer. However,
\[
C_{S}(s)=\prod_{i\in I} C_{S_i}(s_i),
\]
and each $C_{S_i}(s_i)$ contains at least two elements. So $S$ must be a finite product, and $U_k$ is again open. We have thus contradicted the choice of $k$.
\end{proof}

\begin{cor}\thlabel{comeagre t.d.}
If $G$ is a non-trivial, t.d.l.c.s.c. group, then $G$ does not admit a comeagre  or co-null conjugacy class.
\end{cor}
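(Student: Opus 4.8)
The plan is to reduce the corollary directly to \thref{dense conj null} by noting that both a comeagre conjugacy class and a co-null conjugacy class are automatically dense. So I would suppose toward a contradiction that $g^G$ is comeagre or co-null for some $g\in G$. Two standing facts will be used throughout: $G$ is locally compact Hausdorff and hence a Baire space (this is exactly what is invoked in the proof of \thref{mcat}), and every nonempty open subset of $G$ has strictly positive Haar measure.

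First I would show that in either hypothesis $g^G$ is dense. If $g^G$ is comeagre, then $M:=G\setminus g^G$ is meagre; for any nonempty open $O\sleq G$, the set $O$ is itself a Baire space, so the meagre set $M\cap O$ cannot exhaust $O$, whence $O\cap g^G=O\setminus M\neq\emptyset$. If instead $g^G$ is co-null, then $G\setminus g^G$ is null, so for any nonempty open $O$ we have $\mu(O\setminus g^G)=0<\mu(O)$, forcing $O\cap g^G\neq\emptyset$. In both cases $g^G$ meets every nonempty open set and is therefore dense.

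Next I would invoke \thref{dense conj null}: since $G$ is non-trivial and $g^G$ is dense, the conjugacy class $g^G$ is both meagre and null. This is the sole piece of genuine content, and it is already established. It then remains to derive the contradiction. If $g^G$ is comeagre and also meagre, then $G=g^G\cup(G\setminus g^G)$ is a union of two meagre sets, hence meagre, contradicting that $G$ is Baire. If $g^G$ is co-null and also null, then $G=g^G\cup(G\setminus g^G)$ is null, so $\mu(G)=0$, contradicting that the Haar measure of any nonempty open subset of $G$ is positive. Either way we reach a contradiction, which completes the proof.

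I do not anticipate any real obstacle here, since all the difficulty has been front-loaded into \thref{dense conj null}. The present statement is a purely formal consequence, resting only on the elementary observations that comeagre and co-null sets are dense in this setting and that a Baire space (respectively, a space with a nonempty open set of positive measure) cannot itself be meagre (respectively, null).
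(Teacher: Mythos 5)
Your proposal is correct and follows exactly the route the paper intends: the corollary is stated without proof as an immediate consequence of \thref{dense conj null}, and your argument simply supplies the routine details (comeagre/co-null implies dense, then meagre-and-comeagre contradicts Baire, null-and-co-null contradicts positivity of Haar measure on open sets). Nothing is missing.
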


\begin{rmk} \thref{comeagre t.d.} shows that a non-discrete t.d.l.c.s.c. analogue of an infinite discrete group with two conjugacy classes, e.g. \cite{HNN49}, is impossible. The main result of this paper shows indeed there is no non-trivial such l.c.s.c. group
\end{rmk}

We note the hypotheses of \thref{dense conj null} are not vacuous. Indeed, Akin, Glasner, and Weiss have built an example of a non-trivial t.d.l.c.s.c. group with a dense conjugacy class in \cite{AGW08}. We include a sketch of their example for completeness.

\subsection{The example of Akin, Glasner, and Weiss}
Let $\mathcal{J}=\{J_i\;|\;i\in \N\}$ be a sequence of non-empty finite subsets of $\mathbb{N}$ which partition $\N$ and have strictly increasing cardinality. Let $J^k:=\bigcup_{i=0}^kJ_i$ and $K_n$ be the collection of permutations in $Sym(\N)$ which setwise stabilize each of $J^n,J_{n+1},J_{n+2}\dots$; $K_n$ is the collection of permutations of $\N$ which preserve the partition beyond the $n$-th part. It is easy to see $K_n$ is compact as a subset of $Sym(\N)$ and $K_n\sleq_o K_{n+1}$. Put $G:=\bigcup_{i\in \N}K_i$ and give $G$ the inductive topology: $A\subseteq G$ is open if and only if $A\cap K_i$ is open in $K_i$ for all $i$. Akin, Glasner, and Weiss show $G$ is a t.d.l.c.s.c. group. Further, the sets
\[
G(\pi):=\{g\in K_n\;|\;g\upharpoonright_{J^n}=\pi\}
\]
where $n\in \N$ and $\pi\in Sym(J^n)$ vary form a basis for the topology on $G$.

\begin{claim} 
$G$ has a dense conjugacy class. 
\end{claim}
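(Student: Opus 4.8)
The plan is to exhibit a single element $g\in G$ and show that $g^G$ meets every basic open set $G(\pi)$; density then follows since the $G(\pi)$ form a basis. The key observation is that for every $m$ the group $K_m$ contains \emph{all} of $Sym(J^m)$ (it constrains only the coordinates beyond the $m$-th block), so for $g$ preserving $J^m$, conjugating $g$ by elements of $K_m$ realizes on $J^m$ exactly the $Sym(J^m)$-conjugacy class of $g\upharpoonright_{J^m}$, i.e. all permutations of $J^m$ of the same cycle type. Since membership $h\in G(\pi)$ demands both $h\upharpoonright_{J^n}=\pi$ and $h\in K_n$ (so $h$ must also preserve every tail block $J_j$ with $j>n$), I would first reduce the condition $g^G\cap G(\pi)\neq\emptyset$ to a combinatorial one: there is $m\geq n$ so that the cycle type of $g\upharpoonright_{J^m}$ can be written as the cycle type of $\pi$ together with a partition of each of the sizes $|J_{n+1}|,\dots,|J_m|$. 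Given such a splitting one builds a permutation $h$ of $J^m$ acting as $\pi$ on $J^n$ and with the prescribed cycle types on the remaining blocks; then $h$ and $g\upharpoonright_{J^m}$ are $Sym(J^m)$-conjugate, and the conjugator, extended by the identity beyond the $m$-th block, lies in $K_m\leq G$ and carries $g$ into $G(\pi)$.

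With this reduction in hand I would construct $g$ block-diagonally, taking $g\in K_0=\prod_i Sym(J_i)$, by distributing requirements across the blocks. There are only countably many pairs $(n,c)$ with $c$ a cycle type in $Sym(J^n)$ (a partition of $|J^n|$), and since $|J_i|\to\infty$ there are infinitely many blocks of any prescribed size; hence I can choose an injection $(n,c)\mapsto i(n,c)$ with $i(n,c)>n$ and $|J_{i(n,c)}|\geq|J^n|$. On the block $J_{i(n,c)}$ I let $g$ act with cycle type $c$ on a fixed subset of size $|J^n|$ and trivially on the remainder, and on every unassigned block I let $g$ be the identity.

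To verify density, fix $\pi\in Sym(J^n)$ of cycle type $c$ and set $m:=i(n,c)$, so $m>n$. The cycle type of $g\upharpoonright_{J^m}$ is the disjoint union of the block cycle types, and I claim it splits as required: assign the type-$c$ cycles lying in $J_m$ to the $J^n$-slot (realizing $\pi$); for each $j$ with $n<j<m$ the cycles coming from $J_j$ already form a partition of $|J_j|$ and fill the $J_j$-slot exactly; finally place the cycles from the initial blocks $J_0,\dots,J_n$ (of total size $|J^n|$) together with the $|J_m|-|J^n|$ leftover fixed points of $J_m$ into the $J_m$-slot, which then has the correct total size $|J^n|+(|J_m|-|J^n|)=|J_m|$. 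This produces the combinatorial splitting, so by the reduction $g^G$ meets $G(\pi)$; as $(n,\pi)$ was arbitrary, $g^G$ is dense.

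I expect the main obstacle to be exactly the tail-block constraint hidden in $G(\pi)\subseteq K_n$: it does not suffice to produce a conjugate acting as $\pi$ on $J^n$, since that conjugate must simultaneously preserve every later block $J_j$. This is what forces the leftover cycles of $g\upharpoonright_{J^m}$ to be repackable into bins of sizes $|J_{n+1}|,\dots,|J_m|$, a packing problem that is not solvable for an arbitrary $g$. The decisive design choice is to realize each requirement on a single sufficiently large block and keep $g$ trivial elsewhere; this makes the naive packing — send block $J_j$ to its own slot and absorb the small initial blocks $J_0,\dots,J_n$ into the oversized requirement block's slot — land exactly, sidestepping any genuine combinatorial difficulty.
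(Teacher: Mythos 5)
Your argument is correct, but it takes a genuinely different route from the paper. The paper does not construct an element at all: it proves the conjugation action is topologically transitive (for any two basic open sets $G(\pi)$, $G(\xi)$ some conjugate of $G(\pi)$ meets $G(\xi)$, via a partial transposition $b$ supported on $J^n\cup\beta(J^n)$ with $\beta(J^n)\subseteq J_k$ for a large block $J_k$) and then invokes the standard Baire-category fact that a topologically transitive action of a Polish group on a second countable Baire space has a (comeagre set of) dense orbit(s). You instead build an explicit $g\in\prod_i Sym(J_i)$ whose conjugacy class meets every $G(\pi)$, reducing membership of a conjugate in $G(\pi)$ to a cycle-type repacking condition and then designing $g$ so that each requirement $(n,c)$ is realized on its own oversized block. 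Your reduction is sound: since $G(\pi)\subseteq K_n$, a conjugate landing in $G(\pi)$ must preserve $J^n$ and every later block, and because your conjugator lies in $Sym(J^m)\leq K_m$ extended by the identity while $g$ is block-diagonal, the only constraint is indeed that the cycle type of $g\upharpoonright_{J^m}$ split as $c$ plus partitions of $|J_{n+1}|,\dots,|J_m|$; your packing (send $J_j$'s cycles to the $J_j$-slot for $n<j<m$, and absorb $J_0,\dots,J_n$ plus the leftover fixed points into the $J_m$-slot) lands exactly. The one wording slip is ``infinitely many blocks of any prescribed size'' --- the $|J_i|$ are strictly increasing, so you mean infinitely many blocks of \emph{at least} any prescribed size, which is all the greedy choice of the injection $(n,c)\mapsto i(n,c)$ needs. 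What each approach buys: the paper's is shorter and yields for free that \emph{generic} elements have dense class (feeding the ``ample dense elements'' remark), while yours produces a concrete witness and avoids any appeal to the Baire category theorem.
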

\begin{proof} It is enough to show $G$ is topologically transitive. I.e. for all basic open sets $G(\pi),G(\xi)\subseteq G$ there is $k\in G$ such that $kG(\pi)k^{-1}\cap G(\xi)\neq \emptyset$. Without loss of generality, we may assume $\pi, \xi \in Sym(J^n)$ for some $n$. Choose $k>n$ such that $|J_k|>|J^n|$; this is possible since the $J_i$ are strictly increasing in cardinality. Fix an injective map $\beta:J^n\rightarrow J_k$ and extend $\beta$ to an element $b$ of $G$ by 
$$
b(i) =
\begin{cases}
\beta(i) & i\in J^n \\
\beta^{-1}(i), & i\in \beta(J^n)\\
id, &\text{ else }
\end{cases}
$$
It is easy to check that there is $a\in G(\pi)$ such that $ab\upharpoonright_{J^n}=b\xi\upharpoonright_{J^n}$. We thus have $b^{-1}ab\in b^{-1}G(\pi)b\cap G(\xi)$ as desired.
\end{proof}

\begin{rmk}
It is worth nothing the above example has \emph{ample dense elements}: the diagonal action by conjugation of $G$ on the $n$-th Cartesian power of $G$ has a dense orbit for every $n\sgeq 1$.
\end{rmk}

\section{The non-existence of a comeagre conjugacy class}
By \thref{comeagre t.d.}, a non-trivial t.d.l.c.s.c. group does not admit a comeagre conjugacy class.  We now eliminate connected groups as candidates for admitting a comeagre class. The connected case is a previously unpublished result of Professor Hofmann. The author wishes to express his thanks to Professor Hofmann for permitting this result to be included in the present work. \par

\indent To present Hofmann's result, an old theorem due to W. Burnside is required.

\begin{fact}[Burnside {\cite[XVII.3 Corollary 3.3]{L02}}]\thlabel{burnside}
Let $E$ be a finite dimensional vector space over an algebraically closed field $k$ and $R$ a subalgebra of $End_k(E)$. If $E$ has no non-trivial proper $R$-invariant subspaces, then $R=End_k(E)$. We say $E$ is $R$-simple in such a case.
\end{fact}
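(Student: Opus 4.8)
The plan is to prove this by establishing the Jacobson density property for the simple module $E$, the essential input being that Schur's lemma forces the commutant of $R$ to be exactly the scalars $k$. Concretely, I would (1) pin down $End_R(E)$, (2) prove a density statement by induction, and (3) specialize to a basis.

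First I would record the structural fact about $D:=End_R(E)$, the ring of $R$-linear endomorphisms of $E$. By Schur's lemma, since $E$ is $R$-simple every nonzero element of $D$ is invertible, so $D$ is a division ring. Since $E$ is finite dimensional over $k$, any $\phi\in D$ is in particular $k$-linear and hence has an eigenvalue $\lambda\in k$; this is exactly where algebraic closure of $k$ enters. Then $\phi-\lambda\,id$ lies in $D$ and has nonzero kernel, so it is not invertible, hence it is $0$. Therefore $D=k\cdot id$: the only $R$-linear endomorphisms of $E$ are scalars.

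Next comes the heart of the argument. I would show by induction on $m$ that for any $k$-linearly independent $v_1,\dots,v_m\in E$ the evaluation map $R\to E^m$, $r\mapsto (rv_1,\dots,rv_m)$, is surjective. For $m=1$, $Rv_1$ is a nonzero $R$-submodule (it contains $v_1$, as $R$ contains $id$), so $Rv_1=E$ by simplicity. For the inductive step, set $I:=\{r\in R\mid rv_1=\dots=rv_{m-1}=0\}$; this is a left ideal, so $Iv_m$ is an $R$-submodule of $E$ and is therefore $0$ or $E$. The induction closes once $Iv_m=0$ is excluded: for then $Iv_m=E$ lets one correct the last coordinate of any prescribed element of $E^m$ after matching the first $m-1$ coordinates via the inductive hypothesis. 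To rule out $Iv_m=0$, note that this assumption says $rv_m$ depends only on $(rv_1,\dots,rv_{m-1})$, which by the inductive hypothesis ranges over all of $E^{m-1}$; hence there are well-defined $k$-linear maps $g_1,\dots,g_{m-1}\colon E\to E$ with $rv_m=\sum_{j<m}g_j(rv_j)$ for all $r\in R$. Replacing $r$ by $sr$ and using that the arguments $(rv_1,\dots,rv_{m-1})$ are arbitrary, one checks each $g_j$ commutes with the $R$-action, so $g_j\in D=k$ by the first step; writing $g_j=c_j\,id$ and taking $r=id$ gives $v_m=\sum_{j<m}c_jv_j$, contradicting linear independence. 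Finally, applying the density statement to a basis $v_1,\dots,v_n$ of $E$ (with $n=\dim_k E$) shows that every element of $End_k(E)$, being determined by its values on the basis, lies in $R$, so $R=End_k(E)$.

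The hard part will be the exclusion of $Iv_m=0$. Everything hinges on manufacturing the maps $g_j$ from the dependence relation and verifying their $R$-linearity; that verification crucially reuses the inductive hypothesis (that the first $m-1$ coordinates already exhaust $E^{m-1}$), so that the identity $\sum_j g_j(sx_j)=\sum_j s\,g_j(x_j)$ may be tested at arbitrary arguments $x_j\in E$. It is precisely at this point that the scalar description $D=k$ from the first step delivers the contradiction, tying the whole proof together.
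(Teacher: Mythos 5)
Your proof is correct: the paper states this result as a Fact and cites Lang's \emph{Algebra} rather than proving it, and your argument---Schur's lemma plus algebraic closure to identify $End_R(E)$ with the scalars $k$, then the inductive Jacobson density argument, then evaluation on a basis---is precisely the route taken in the cited reference, where Burnside's theorem appears as a corollary of the Jacobson density theorem. The one convention you lean on, namely that the subalgebra $R$ contains $\mathrm{id}$ (used in the base case $Rv_1\neq 0$ and in the final step $r=\mathrm{id}$), is the standard reading of ``subalgebra'' and is harmless in the paper's application, where $R$ is generated by a group of invertible operators.
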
   

\begin{lem} \thlabel{no alg dense}
If $G\sleq GL_n(\mathbb{C})$ is a closed subgroup with a dense conjugacy class, then $G=\{Id\}$.
\end{lem}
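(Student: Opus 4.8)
The plan is to use density to promote the conjugacy invariants of $g$ into \emph{global} constraints on all of $G$, and then to exploit that inside a unipotent matrix group there are enough conjugation-invariant homomorphisms to force triviality layer by layer.

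First I would look at the characteristic polynomial $\chi_h(x)=\det(x\,Id-h)$. It is continuous on $M_n(\mathbb{C})$ and constant on the conjugacy class $g^G$, since it is a conjugacy invariant. As $g^G$ is dense in $G$, continuity forces $\chi_h$ to be constant on $\overline{g^G}=G$, and evaluating at $Id\in G$ gives $\chi_h(x)=(x-1)^n$ for every $h\in G$. Hence every element of $G$ is unipotent. (For $n=1$ this already yields $G=\{Id\}$.)

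Next I would show that, after conjugation, $G$ lies in the group $U_n$ of upper unitriangular matrices. Put $R:=\mathrm{span}_{\mathbb{C}}(G)\sleq M_n(\mathbb{C})$, a subalgebra because $G$ is a group, and $N:=\mathrm{span}_{\mathbb{C}}\{h-Id\;|\;h\in G\}$. Using $\mathrm{tr}(h)=n$ for $h\in G$ and $ab\in G$ for $a,b\in G$, a short computation shows $R=\mathbb{C}\,Id+N$, that $N$ is a subalgebra, and that $\mathrm{tr}(xy)=0$ for all $x,y\in N$. Since $N$ is an algebra and $\mathrm{tr}$ vanishes on $N$, we get $\mathrm{tr}(x^m)=0$ for all $x\in N$ and all $m\sgeq 1$, so by Newton's identities every element of $N$ is nilpotent. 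Now Burnside's theorem (\thref{burnside}) is the engine: if $\mathbb{C}^n$ had no proper nonzero $R$-invariant subspace then $R=M_n(\mathbb{C})$, i.e.\ every matrix would be a scalar plus a nilpotent, which is false for $n\sgeq 2$; hence an invariant subspace exists. Restricting to it and to the quotient preserves the nilpotency of the operators, so an induction on dimension simultaneously strictly upper-triangularizes $N$. Therefore $G\sleq Id+N\sleq U_n$ after a change of basis, and conjugation does not affect the hypothesis.

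Finally, with $G\sleq U_n$, I would strip off one superdiagonal at a time. Let $U_n^{(d)}$ be the normal subgroup of $U_n$ of matrices whose first $d-1$ superdiagonals vanish, so $U_n^{(1)}=U_n$, $U_n^{(n)}=\{Id\}$, and reading off the $d$-th superdiagonal gives a continuous homomorphism $\rho_d:U_n^{(d)}\to\mathbb{C}^{n-d}$ with kernel $U_n^{(d+1)}$. The decisive observation is that $\rho_d(vuv^{-1})=\rho_d(u)$ for $v\in U_n$ and $u\in U_n^{(d)}$, because all correction terms in $vuv^{-1}$ lie strictly above the $d$-th superdiagonal; equivalently, conjugation acts trivially on the successive quotients of the lower central series of $U_n$. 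Assuming inductively $G\sleq U_n^{(d)}$, the map $\rho_d$ is constant equal to $\rho_d(g)$ on $g^G$, hence constant on $G=\overline{g^G}$ by continuity, hence equal to $\rho_d(Id)=0$. Thus $G\sleq U_n^{(d+1)}$, and iterating up to $d=n$ gives $G=\{Id\}$. I expect the triangularization step to be the main obstacle: converting ``every element is unipotent'' into ``$G\sleq U_n$'' is where Burnside's theorem and the nilpotency of $N$ do the real work, after which the superdiagonal induction is essentially bookkeeping once the conjugation-invariance of $\rho_d$ is in hand.
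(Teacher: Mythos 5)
Your argument is correct, and although it opens exactly as the paper does (the characteristic polynomial is continuous and conjugation-invariant, hence constant on $\overline{g^G}=G$, so every element is unipotent) and leans on the same two tools --- Burnside's theorem and the vanishing of the trace on nilpotent operators --- the way you deploy them afterwards is genuinely different. The paper takes a \emph{minimal} non-trivial invariant subspace $E$, uses Burnside positively to get that the algebra generated by $G|_E$ is all of $End_{\mathbb{C}}(E)$, and then pairs the identity $Tr(C(B-Id))=0$ against elementary matrices to force $B=Id$ on $E$; iterating on quotients yields a full flag with trivial action on successive quotients. You instead package the trace identity into the statement that $N=\mathrm{span}\{h-Id\}$ is a trace-zero subalgebra, invoke Newton's identities to see $N$ is nil, and use Burnside \emph{contrapositively} (the span of $G$ cannot be all of $M_n(\mathbb{C})$ since every element is scalar plus nilpotent) to manufacture invariant subspaces --- in effect reproving Kolchin's theorem that a unipotent linear group is conjugate into $U_n$. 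The two routes land at the same intermediate statement. The final descent also differs: the paper shows each quotient $G/K_{i}$ is abelian via an explicit commutator computation and then uses that an abelian group with a dense conjugacy class is trivial, whereas you observe that the $d$-th superdiagonal map $\rho_d$ is a continuous homomorphism invariant under conjugation by all of $U_n$, so density forces it to vanish on $G$. Your version of the descent is arguably cleaner bookkeeping (it isolates the centrality of the lower central series quotients as the mechanism), while the paper's version avoids having to fix an adapted basis and check the invariance of $\rho_d$. Both are complete; only minor points in yours deserve a written check, namely that $\rho_d$ really is a homomorphism killed by conjugation (the correction terms $yx-xy$ land in superdiagonals $\sgeq d+1$) and that restriction to an invariant subspace and passage to the quotient preserve the hypotheses in the Kolchin induction.
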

\begin{proof}
We consider general linear groups to be written as matrix groups in the standard basis. Suppose $G\sleq GL_n(\mathbb{C})$ and say $A\in G$ has a dense conjugacy class. \par

\indent Since $A^G$ is dense, we may find $B_iAB_i^{-1}\rightarrow Id$ with $B_i\in G$. The determinant $\det:M_n(\mathbb{C})\rightarrow \mathbb{C}$ is continuous, and therefore,
\[
\det(tId-B_iAB_i)\rightarrow \det(tId-Id)=(t-1)^n.
\]
Since $\det$ is invariant under conjugation by elements of $GL_n(\mathbb{C})$, $\det(tId-B_iAB_i)=\det(tId-A)$ for all $i$, so $\det(tId-A)=(t-1)^n$. It follows every $B\in G$ has characteristic polynomial $(t-1)^n$. By the Cayley$-$Hamilton theorem, $(B-Id)^n=0$ for all $B\in G$. \par

\indent Consider $B,C\in G$ and let $Tr$ be the usual trace function. Then,
\begin{equation}
Tr(C(B-Id))=Tr(CB-Id)-Tr(C-Id)=0
\end{equation}
since the trace is linear and $Tr$ vanishes on elements for which some power is zero. Note $(1)$ holds for any linear group over $\mathbb{C}$ with a dense conjugacy class.\par

\indent We now consider $E$ a least dimension non-trivial $G$-invariant subspace of $\mathbb{C}^n$. Let $r:G\rightarrow GL(E)$ be the induced map. Certainly, $\tilde{G}:=cl(r(G))$ again has a dense conjugacy class, and $E$ is a $\tilde{G}$-simple vector space. Set $R\subseteq End_{\mathbb{C}}(E)$ to be the algebra generated by $\tilde{G}$. Since $R$ contains $\tilde{G}$, we see that $E$ is $R$-simple and $R=End_{\mathbb{C}}(E)$ by Burnside's theorem, \thref{burnside}. \par

\indent Fix $B\in \tilde{G}$ and take $M\in End_{\mathbb{C}}(E)$. Since $M=\sum_{i=1}^m\alpha_iA_i$ where $A_i\in \tilde{G}$ and $\alpha_i\in \mathbb{C}$, we have that $Tr(M(B-Id))=0$ by $(1)$. Consider $M_{ij}$ the matrix which is $1$ on the $(i,j)$-th entry and $0$ else. So $ M_{ij}(B-Id)$ is the matrix with diagonal consisting of zeros and the $(j,i)$-th entry of $(B-Id)$. In view of $(1)$, the $(j,i)$-th entry must be zero. We conclude $\tilde{G}$ is trivial, every element of $G$ fixes $E$, and $E$ is one dimensional. Since $E=:E_1$ is fixed by $G$, the action of $G$ on $\mathbb{C}^n$ gives an action on $\mathbb{C}^n/E_1$. We may thus repeat the argument above to obtain $E_1<E_2\dots <E_n=\mathbb{C}^n$ a strictly increasing sequence of $G$-invariant vector subspaces such that $E_{i+1}/E_i$ is one dimensional and $G$ acts trivially on $E_{i+1}/E_{i}$.\par

\indent Form $K_i:=\ker(G\acts E_i)$ for each $1\sleq i\sleq n$. Certainly, $K_1=G$. For $K_2$, $E_2$ has two dimensions; say $E_2=M\oplus E_1$. Fix $m+e\in E_2$, take $A,B\in G$, and consider their action on $m+e$. Since $G$ acts trivially on $E_1$ and $E_2/E_1$, $A(m+e)=m+\alpha e +e$ and $B(m+e)=m+\beta e +e$ for some $\alpha, \beta\in \mathbb{C}$. Additionally, $A^{-1}(m+e)=m-\alpha e +e$ and $B^{-1}(m+e)=m-\beta e +e$. These observations yield
\[
\begin{array}{ccl}
A^{-1}B^{-1}AB(m+e) & = & A^{-1}B^{-1}A(m+\beta e +e)\\
					& = & A^{-1}B^{-1}(m+\alpha e+\beta e +e)\\
					& = & A^{-1}(m-\beta e+\alpha e+\beta e +e)\\
					& = & (m-\alpha e - \beta e+\alpha e+\beta e +e)\\
					& =& m+e.
\end{array}
\]
Thus, $A^{-1}B^{-1}AB\in K_2$, $G/K_2$ is abelian, and $K_2=G$ since $G/K_2$ has a dense conjugacy class. Continuing in this fashion, $G=K_n$, and $G$ acts trivially on $\mathbb{C}^n$. We have thus demonstrated $G=\{1\}$.
\end{proof}

\begin{thm}[Hofmann]\thlabel{comeagre}
A non-trivial connected locally compact group cannot have a dense conjugacy class.
\end{thm}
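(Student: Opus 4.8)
The plan is to reduce, via Hilbert's fifth problem and the adjoint representation, to the linear situation already settled in \thref{no alg dense}. Suppose toward a contradiction that $G$ is a non-trivial connected locally compact group with a dense conjugacy class $g^G$. By \thref{hilberts5}, $G$ is pro-Lie; fixing a presentation of $G$ as an inverse limit of Lie groups, we obtain a family of closed normal subgroups $N\trianglelefteq G$ with each $G/N$ a Lie group and $\bigcap N=\{1\}$. Since $G\neq\{1\}$, some $N$ in this family satisfies $N\neq G$. Set $L:=G/N$. As the continuous quotient image of the connected locally compact group $G$, the group $L$ is a non-trivial connected locally compact Lie group, so $\mathbb{K}$ is forced to be $\mathbb{R}$ or $\mathbb{C}$ and the adjoint map takes the form $Ad:L\rightarrow GL_n(\mathbb{K})$ for some finite $n$. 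Writing $q:G\rightarrow L$ for the quotient map, the set $q(g^G)=q(g)^L$ is a dense conjugacy class in $L$, since the image of a dense set under a continuous surjection is dense.

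Next I would push density through the adjoint representation. As $Ad$ is a continuous homomorphism, $Ad\left(q(g)^L\right)=Ad(q(g))^{im(Ad)}$, and this set is dense in $im(Ad)$, again because it is the image of a dense set under the continuous surjection $Ad:L\rightarrow im(Ad)$. Let $H$ denote the closure of $im(Ad)$ inside $GL_n(\mathbb{C})$; if $\mathbb{K}=\mathbb{R}$ we use the natural embedding $GL_n(\mathbb{R})\hookrightarrow GL_n(\mathbb{C})$ and the fact that $GL_n(\mathbb{R})$ is closed in $GL_n(\mathbb{C})$, so that $H$ is in all cases a closed subgroup of $GL_n(\mathbb{C})$. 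The conjugacy class $Ad(q(g))^{im(Ad)}$ is dense in $im(Ad)$, hence dense in $H$, and it is contained in $Ad(q(g))^{H}$; thus $H$ carries a dense conjugacy class. By \thref{no alg dense} this forces $H=\{Id\}$, whence $im(Ad)=\{Id\}$.

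Finally I would unwind this to reach a contradiction. Since $im(Ad)$ is trivial, \thref{adjoint} gives $L/Z(L)\simeq im(Ad)=\{Id\}$, so $L$ is abelian. In an abelian group every conjugacy class is a singleton, so $q(g)^L=\{q(g)\}$; a dense singleton in the Hausdorff group $L$ is all of $L$, forcing $L=\{1\}$ and contradicting the non-triviality of $L$. Hence no such $G$ exists. The genuine difficulty of the theorem is already absorbed into \thref{no alg dense}, so I expect the only points requiring real care to be bookkeeping ones: the passage from a possibly non-closed image $im(Ad)$ (over $\mathbb{R}$ or $\mathbb{C}$) to a genuinely closed subgroup of $GL_n(\mathbb{C})$ eligible for \thref{no alg dense}, and the verification that density survives both the quotient $q$ and the adjoint map together with the final closure.
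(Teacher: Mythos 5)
Your proof is correct and follows essentially the same route as the paper: reduce to a non-trivial Lie quotient via the pro-Lie structure from \thref{hilberts5}, push the dense conjugacy class through $Ad$, apply \thref{no alg dense}, and use \thref{adjoint} to conclude the quotient is abelian and hence trivial. Your extra care in passing to the closure of $im(Ad)$ before invoking \thref{no alg dense} tidies a detail the paper's proof leaves implicit.
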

\begin{proof}
For contradiction, suppose $G$ is a non-trivial connected locally compact group with a dense conjugacy class. By \thref{hilberts5}, $G$ is pro-Lie. We may thereby find a proper closed subgroup $N\triangleleft G$ such that $G/N$ is Lie. Since $N$ is proper, $G/N$ has a dense conjugacy class. \par

\indent Let $\tilde{G}:=Ad(G/N)\sleq GL_n(\mathbb{K})$ where $\mathbb{K}$ is either the real or complex field; under the natural inclusion $GL_n(\mathbb{R})\sleq GL_n(\mathbb{C})$, we may assume $\mathbb{K}=\mathbb{C}$. \thref{adjoint} implies $\tilde{G}$ has a dense conjugacy class, and therefore, $\tilde{G}=\{1\}$ by \thref{no alg dense}. So $G/N$ is trivial since abelian with a dense conjugacy class contradicting the choice of $N$. 
\end{proof}

Combining \thref{comeagre t.d.} and \thref{comeagre}, we answer Kechris' and Rosendal's question.

\begin{thm} 
A non-trivial l.c.s.c. group does not admit a comeagre conjugacy class. 
\end{thm}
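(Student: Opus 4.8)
The plan is to argue by contradiction, splitting on whether $G$ is connected, and in the disconnected case to push the putative comeagre class down to the totally disconnected quotient $G/G^{\circ}$, where the results of the previous sections apply. So suppose $G$ is a non-trivial l.c.s.c. group carrying a comeagre conjugacy class $g^G$. Since l.c.s.c. groups are Polish, $G$ is a Baire space; hence $g^G$ is non-meagre, and being comeagre it is in particular dense.

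If $G=G^{\circ}$ is connected, I am done immediately: $g^G$ is then a dense conjugacy class in a non-trivial connected locally compact group, which directly contradicts Hofmann's \thref{comeagre}. So I may assume $G\neq G^{\circ}$ and consider the quotient map $\pi\colon G\to H:=G/G^{\circ}$. As the quotient of an l.c.s.c. group by a closed normal subgroup, $H$ is again l.c.s.c., and by the structure recalled in the preliminaries it is totally disconnected; it is non-trivial precisely because $G^{\circ}\neq G$. The map $\pi$ is a continuous open surjection, and since $\pi$ is surjective the set $\pi(g^G)$ is exactly the conjugacy class $C:=(gG^{\circ})^{H}$ of $gG^{\circ}$ in $H$. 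As $g^G$ is dense and $\pi$ is a continuous surjection, $C$ is dense in $H$, so \thref{dense conj null} forces $C$ to be meagre in $H$.

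The remaining step is to transfer this meagreness back to $G$. Here I would invoke the elementary fact that, for an open continuous surjection $\pi$, the preimage of a nowhere dense set is nowhere dense: if $F$ is nowhere dense then $\pi^{-1}(\overline{F})$ is closed, and were it to contain a non-empty open $V$, then $\pi(V)$ would be a non-empty open subset of $\pi(\pi^{-1}(\overline{F}))=\overline{F}$, contradicting that $\overline{F}$ has empty interior. Writing a meagre set as a countable union of nowhere dense sets, it follows that $\pi^{-1}(C)$ is meagre in $G$. Since $g^G\subseteq \pi^{-1}(C)$, the class $g^G$ is meagre, contradicting that it is non-meagre and completing the contradiction. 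Combining the two cases answers Kechris' and Rosendal's question.

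I expect the only delicate bookkeeping to be the openness of the quotient map and the attendant fact that $\pi^{-1}$ preserves meagreness; these are standard but worth stating explicitly, and they are what let the stronger dense-class statement \thref{dense conj null} (the source of \thref{comeagre t.d.}) do the work in the totally disconnected quotient. Everything else is a direct appeal to \thref{comeagre} and \thref{dense conj null}, together with the reduction that a comeagre set in a Baire space is both dense and non-meagre.
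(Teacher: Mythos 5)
Your proof is correct and follows the same overall architecture as the paper's: split on whether $G$ is connected, invoke Hofmann's theorem (\thref{comeagre}) in the connected case, and push the class into $G/G^{\circ}$ otherwise. The one genuine difference lies in how the contradiction is extracted in the disconnected case. The paper first upgrades the comeagre class to an \emph{open} dense class --- writing $G=\bigcup_{i}K_i$ with $K_i$ compact, noting some $g^{K_i}$ is non-meagre and closed, hence has non-empty interior --- and then observes that the image in $G/G^{\circ}$ is an open dense conjugacy class, contradicting \thref{comeagre t.d.} directly in the quotient. You instead push only density forward, apply \thref{dense conj null} to conclude the image class $C$ is meagre in the quotient, and pull meagreness back along the open quotient map to contradict non-meagreness of $g^G$ in $G$ itself. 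Both routes are sound: your version trades the paper's $K_{\sigma}$/closed-conjugate-set argument for the (equally standard) lemma that preimages of nowhere dense sets under continuous open surjections are nowhere dense, and in particular never needs to establish that $g^G$ is open. The supporting facts you flag as delicate (openness of the quotient map, $G/G^{\circ}$ being a non-trivial t.d.l.c.s.c. group, $\pi(g^G)=(gG^{\circ})^{G/G^{\circ}}$, and $g^G\subseteq\pi^{-1}(C)$) are all verified correctly.
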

\begin{proof} 
Suppose for contradiction $G$ is a l.c.s.c. group and $g^G$ is comeagre. Since $G$ is $K_{\sigma}$, we may write $G=\bigcup_{i\in \N}K_i$ with the $K_i$ compact sets. So $g^G=\bigcup_{i\in \N}g^{K_i}$, and there is $i\in \N$ such that $g^{K_i}$ is non-meagre by the Baire category theorem. For such an $i$, we see that $g^{K_i}$ is closed and, therefore, must have non-empty interior. It follows $g^G$ is open as well as dense.\par

\indent If $G$ is connected, we contradict \thref{comeagre}, so $\tilde{G}:=G/G^{\circ}$ must be non-trivial where $G^{\circ}$ is the connected component of the identity. However, the image of $g^G$ in $G/G^{\circ}$ is now an open dense conjugacy class in the non-trivial t.d.l.c.s.c. group $G/G^{\circ}$. This contradicts \thref{comeagre t.d.}, and we conclude the theorem.
\end{proof}

\end{document}